\newcommand{\norm}[1]{\Vert #1 \Vert}  
\newcommand{\abs}[1]{\vert #1 \vert}  
\newcommand{\ov}{\overline}
\newcommand{\gr}{\mathrm{grad}}
\newcommand{\Ker}{\mathrm{Ker}\,}
\newcommand{\dif}{\mathrm{d}}
\newcommand{\Dd}{\mathfrak{D}}
\newcommand{\di}{\mathrm{div}}
\newcommand{\tr}{\mathrm{trace}}
\newcommand{\ii}{\mathrm{i}}
\newcommand{\CC}{\mathbb{C}}
\newcommand{\Cc}{\mathfrak{C}}
\newcommand{\TT}{\mathbb{T}}
\newcommand{\HH}{\mathcal{H}}
\newcommand{\VV}{\mathcal{V}}
\newcommand{\DD}{\mathcal{D}} 
\newcommand{\RR}{\mathbb{R}}   
\newcommand{\ZZ}{\mathbb{Z}} 
\newcommand{\Ss}{\mathbb{S}} 
\numberwithin{equation}{section}
\newtheorem{te}{Theorem}[section]
\newtheorem{pr}{Proposition}[section]
\newtheorem{co}{Corollary}[section]
\theoremstyle{definition}
\newtheorem{de}{Definition}[section]    
\newtheorem{re}{Remark}[section]
\newtheorem{ex}{Example}[section]
\begin{document}

\title{On the geometrized Skyrme and Faddeev models}

\author{Radu Slobodeanu}

\address{Department of Theoretical Physics and Mathematics, University of Bucharest, P.O. Box Mg 11, RO - 077125 Bucharest, Romania.}

\email{radualexandru.slobodeanu@g.unibuc.ro}

\date{\today}

\thanks{This research was supported by the CEx Grant no. 2-CEx 06-11-22/ 25.07.2006 and, during the preparation of the present revised version, by PN II Idei Grant, CNCSIS, code 1193.}

\subjclass[2000]{53B35, 53B50, 58E20, 58E30, 81T20}
\keywords{Variation, critical point, stability, harmonic map}

\begin{abstract}
The higher-power derivative terms involved in both Faddeev and Skyrme energy functionals correspond to $\sigma _2$--energy, introduced by Eells and Sampson in \cite{els}. The paper provides a detailed study of the first and second variation formulae associated to this energy. Some classes of (stable) critical points are outlined.
\end{abstract}

\maketitle

\section{Introduction}
Common tools in field theory, non-linear $\sigma$-models are known in differential geometry mainly through the problem of \textit{harmonic maps} between Riemannian manifolds. Namely a (smooth) mapping $\varphi:(M,g) \to (N,h)$ is harmonic if it is critical point for the Dirichlet energy functional \cite{els}, 
\begin{equation*}
\mathcal{E}(\varphi) =\frac{1}{2} \int_M \abs{\dif \varphi}^2 \nu_g,
\end{equation*}
a generalization of the kinetic energy of classical mechanics.

Less discussed from differential geometric point of view are Skyrme and Faddeev-Hopf models, which are $\sigma$--models with additional fourth-power derivative terms (for an overview including recent progress concerning both models, see \cite{lin}). 

The first one was proposed in the sixties by Tony Skyrme \cite{sky}, to model baryons as \textit{topological solitons} (see \cite{mant}) of pion fields. Meanwhile it has been shown \cite{wit} to be a low energy effective theory of quantum chromodynamics that becomes exact as the number of quark colours becomes large. Thus baryons are represented by energy minimising, topologically nontrivial maps $\varphi: \mathbb{R}^3 \to SU(2) \cong \mathbb{S}^3$ with the boundary condition $\varphi(\{|x| \to \infty \}) = I_2$, called \textit{skyrmions}. Their topological degree is identified with the \textit{baryon number}.
The static (conveniently renormalized) Skyrme energy functional is  
\begin{equation}\label{eskyrme}
\mathcal{E}_{\texttt{Skyrme}}(\varphi)=
\frac{1}{2} \int_{\RR^3} \left(\abs{\dif \varphi}^2 +
\frac{1}{2}\abs{\dif \varphi \wedge \dif \varphi}^2 \right)\dif^3 x.
\end{equation}
This energy has a topological lower bound \cite{fa}: $\mathcal{E}_{\texttt{Skyrme}}(\varphi) \geq 
6 \pi^2 \abs{\mathrm{deg}\varphi}$.

In the second one, stated in 1975 by Ludvig Faddeev and Antti J. Niemi \cite{fad}, the configuration fields are unitary vector fields $\varphi: \mathbb{R}^3 \to \mathbb{S}^2 \subset \mathbb{R}^3$ with the boundary condition $\varphi(\{|x| \to \infty \}) = (0,0,1)$. The static energy in this case is given by
\begin{equation}\label{efaddeev}
\mathcal{E}_{\texttt{Faddeev}} (\varphi)=\int_{\RR^3} \left( c_2 \abs{\dif \varphi}^2
+ c_4 \langle \dif \varphi \wedge \dif \varphi, \varphi \rangle ^2 \right) \dif^3 x,
\end{equation}
where $c_2, c_4$ are coupling constants.

Again the field configurations are indexed by an integer, their \textit{Hopf invariant}: $Q(\varphi) \in \pi_3(\mathbb{S}^2) \cong \mathbb{Z}$ and the energy has a topological lower bound:
$\mathcal{E}_{\texttt{Faddeev}}(\varphi) \geq c \cdot \abs{Q(\varphi)}^{3/4}$, cf. \cite{vaku}. Although this model can be viewed as a constrained variant of the Skyrme model, it exhibits important specific properties, e.g. it allows \textit{knotted solitons}.
Moreover, in \cite{dual} it has been proposed that it arises as a dual description of strongly coupled $SU(2)$ Yang-Mills theory, with the solitonic strings (possibly) representing glueballs. See also \cite{dual+} for an alternative approach to these issues.

\medskip
Both models rise the same kind of topologically constrained minimization problem: find out static energy minimizers in each topological class (i.e. of prescribed baryon number or Hopf invariant). We can give an unitary treatment for both if we take into account that they are particular cases of the following energy-type functional:

\begin{equation}\label{generg}
\mathcal{E}_{\sigma_{1,2}}:\mathcal{C}^{\infty}(M,N) \to \RR_{+}, \qquad \mathcal{E}_{\sigma_{1,2}}(\varphi) =
\frac{1}{2} \int_M \left[ \abs{\dif \varphi}^2 + 
\kappa \cdot \sigma_2(\varphi)\right]  \nu _g,
\end{equation}
where $(M,g)$, $(N,h)$ are (smooth) Riemannian manifolds, $\kappa \geq 0$ is a coupling constant and $\sigma_2(\varphi)$ is the second elementary symmetric function of the eigenvalues of $\varphi^* h$ with respect to $g$. 

Even if the variational problem for the $\sigma_p$-energy has already been treated in \cite{sacs, ee, cri}, very little is known about its solutions. From our point of view, the particularities of $p=2$ case are worth to be outlined for their differential geometric interest in its own and hopefully for providing hints for further investigations on the original physical models.

The present generalization of \eqref{eskyrme} and \eqref{efaddeev} was proposed in \cite{los, man}. Other generalizations of Skyrme and Faddeev energies are discussed in \cite{au, auk, fer, kos, sve, yang}.

The paper is organized as follows. The next section reviews the higher power energies in terms of eigenvalues of the Cauchy-Green tensor and some classes of mappings characterized by their distortion. In section 3 the Euler-Lagrange equations for $\sigma_2$--energy are derived and  general solutions inside these classes are pointed out. The effect of (bi)conformal changes of domain metric is also stated. Section 4 presents the second variation formula and analyses the stability of homothetic and holomorphic solutions. Finally we apply the results of previous sections to the stability of homotheties for the full energy \eqref{generg} and to some old and new \textit{ansatze} for stationary field configurations. We end with an example of absolute minima for the strongly coupled Faddeev model on $\Ss^3$ and a discussion on possible contactomorphic solutions.

\section{Higher power energies and the Cauchy-Green tensor}

\subsection{The Cauchy-Green tensor and the geometrical distortion induced by a map}
Let $\varphi: (M^{m}, g) \to (N^{n}, h)$ a smooth mapping between Riemannian manifolds of dimensions $m$ and $n$. 
The so called {\it first fundamental form} of $\varphi$ is the symmetric, positive semidefinite 2-covariant tensor field on $M$, 
defined as $\varphi^{*}h$, cf. \cite{ee}.
Alternatively, using the musical isomorphism, we can see it as the endomorphism $\Cc _{\varphi} = \dif \varphi^{t} \circ \dif \varphi: TM \rightarrow TM$, where $\dif \varphi^{t}:TN \rightarrow TM$ denote {\it the adjoint} of $\dif \varphi$. When $m=n=3$, this corresponds to the \textit{(right) Cauchy-Green (strain) tensor} of a deformation in non-linear elasticity (we shall maintain this name for $\Cc _{\varphi}$ in the general case). 

The Cauchy-Green tensor is always diagonalizable; 
let $\lambda_{1}^{2}$, $\lambda_{2}^{2}$, ..., $\lambda_{r}^{2}$ and $\lambda_{r+1}^{2}=...=\lambda_{m}^{2}=0$
be its (real, non-negative) eigenvalues, where $r := \mathrm{rank} (\dif \varphi)$ everywhere. Recall that $\lambda_{i}=\sqrt{\lambda_{i}^{2}}$ are also called \textit{principal distortion coefficients of} $\varphi$.

The elementary symmetric functions in the eigenvalues of $\varphi^{*}h$ represent a measure of the geometrical distortion induced by the map.
They are called \textit{principal invariants}
of $\dif \varphi$ and will be denoted by:
$$
\sigma_1(\varphi) = \sum_{i=1}^{m}\lambda_{i}^{2};
\quad \sigma_2(\varphi) = \sum_{i<j=1}^{m}\lambda_{i}^{2}\lambda_{j}^{2};
\quad . . . \ ; \quad \sigma_m(\varphi) = \lambda_{1}^{2}\lambda_{2}^{2} \cdots \lambda_{m}^{2},
$$
or, alternatively:
$$
\sigma_1(\varphi)= 2e(\varphi); \qquad
\sigma_2(\varphi) = \abs{\wedge^2 \dif \varphi}^2; \quad . . . \ ;
\quad \sigma_m(\varphi)= [v(\varphi)]^2,
$$
where $e(\varphi)=\frac{1}{2}\abs{\dif \varphi}^2$ is the \textit{energy density of} $\varphi$ and $v(\varphi)=\sqrt{\mathrm{det}(\varphi^* h)}$ is the \textit{volume density of} $\varphi$, cf. \cite{els}.

\begin{re}
At any point of $M$, there is an orthonormal basis $\{ e_{i} \}$ of corresponding eigenvectors for $\varphi^{*}h$ at that point. Moreover, according to \cite[Lemma 2.3]{pant}, we have a \textit{local} orthonormal frame of eigenvector fields, around any point of a dense open subset of $M$. In particular, for such local "eigenfields" we have:
$\varphi^{*}h(e_{i}, e_{j})=\delta_{ij}\lambda_{i}^{2}$, so $\{ \dif \varphi(e_{i}) \}$ are orthogonal with norm $\norm{\dif \varphi(e_{i})}=\lambda_{i}$. 
\end{re}

\subsection{Higher power energies}
According to \cite{els}, up to a half factor, we shall call \textit{$\sigma_p$--energy}, the following functional
\begin{equation} \label{sp}
\mathcal{E}_{\sigma_p}(\varphi) = \frac{1}{2} 
\int_M \sigma_p(\varphi) \nu _g.
\end{equation}

Therefore, the generalized energy \eqref{generg} reads
\begin{equation} \label{sky}
\mathcal{E}_{\sigma_{1,2}}(\varphi) = 
\mathcal{E}_{\sigma_1}(\varphi)+\kappa \mathcal{E}_{\sigma_2}(\varphi) = \frac{1}{2} 
\int_M \left( \sum_{i}\lambda_{i}^{2} + \kappa \sum_{i < j}\lambda_{i}^{2}\lambda_{j}^{2} \right) \nu _g.
\end{equation}

\bigskip

Let us recall another type of (higher power) energy-type functional that will be useful for our further discussion. The \textit{$p$-energy} of a (smooth) map is defined as:
\begin{equation*}
\mathcal{E}_p(\varphi)=\frac{1}{p}\int_{M}\abs{\dif \varphi}^p \nu _g
\end{equation*}

The corresponding Euler-Lagrange operator/equations are, cf. \cite{take}
\begin{equation*}
\tau_p(\varphi):=\abs{\dif \varphi}^{p-2}\left[\tau(\varphi)+
(p-2)\dif \varphi (\mathrm{grad}(\ln \abs{\dif \varphi}))\right]
\equiv 0,
\end{equation*}
where $\tau(\varphi):=\mathrm{trace}\nabla \dif \varphi$ is the \textit{tension field} of $\varphi$ (i.e. the Euler-Lagrange operator associated to the Dirichlet energy). The solutions of these equations are called \textit{p-harmonic} maps.

In particular, for $p=4$, we have  
\begin{equation}\label{21}
\abs{\dif \varphi}^{2} \left[\tau(\varphi)+
2\dif \varphi (\mathrm{grad}(\ln \abs{\dif \varphi}))\right]=0,
\end{equation}
or, equivalently, $e(\varphi)\tau(\varphi)+
\dif \varphi (\mathrm{grad}(e(\varphi))=0$.

\begin{re}
It is easy to see that $\mathcal{E}_{\sigma_{2}}(\varphi) = 
\frac{1}{4} \int_M (\abs{\dif \varphi}^4 - \abs{\varphi^* h}^2) \nu _g = \mathcal{E}_{4}(\varphi) - \frac{1}{4} \int_M \abs{\varphi^* h}^2 \nu _g$. The relation with the $4$-energy is clearer if we point out that, using \textit{Newton's inequalities},
\begin{equation*}
\mathcal{E}_{\sigma_2}(\varphi) \leq \frac{n-1}{n}\mathcal{E}_4(\varphi)
\end{equation*}
with equality if and only if $\lambda_1= ... =\lambda_n$. If in addition $\varphi$ is of bounded dilation, i.e. $\lambda_{1}^{2}/\lambda_{2}^{2} \leq K^{2}$, we have also the reversed inequality 
\begin{equation*}
\frac{2}{n^2 K^2}\mathcal{E}_4(\varphi) \leq \mathcal{E}_{\sigma_2}(\varphi).
\end{equation*}
\end{re}

\subsection{Classes of mappings characterized by their distortion} Let us recall some classes of mappings that will have a particular behaviour with respect to the above mentioned energies. For the  contact or symplectic geometry background and corresponding notations we refer the reader to \cite{ble}.
\begin{enumerate}
\item (Equal eigenvalues.)
$(i)$ When $m \geq n$ and $r \in\{0,n\}$,  if $\lambda_{1}^{2}=...=\lambda_{r}^{2}=\lambda^{2}$, we say that our map is \textit{horizontally weakly conformal} (HWC) or \textit{semiconformal} of \textit{dilation} $\lambda$, cf. \cite[p. 46]{ud} (when the map is submersive, we shall omit the word "weakly"). If moreover $\gr \lambda \in \Ker \dif \varphi$ (at regular points), then the map is called \textit{horizontally homothetic} (HH); in the particular case when $\gr \lambda = 0$, we call it simply \textit{homothetic}. 

$(ii)$ When $m \leq n$ and $r \in\{0, m\}$,  if $\lambda_{1}^{2}=...=\lambda_{r}^{2}=\lambda^{2}$, we say that our map is \textit{(weakly) conformal}, cf. \cite[p. 40]{ud}. If $m=n$ this notion is equivalent to the above one.

\item (Pairwise equal eigenvalues.) When $N$ is endowed with an almost Hermitian structure $J$ (so $n$ is even), a class of mappings that includes the above ones was defined by $[\dif \varphi \circ \dif \varphi ^t, J]=0$, cf. \cite{lub}. These maps are called \textit{pseudo horizontally weakly conformal maps} (PHWC).  In this case, cf. \cite{slub}, the eigenvalues of $\varphi^* h$ have multiplicity 2, i.e. $\lambda_{1}^{2}=\lambda_{2}^{2}$, $\lambda_{3}^{2}=\lambda_{4}^{2}$, ... $\lambda_{r-1}^{2}=\lambda_{r}^{2}$, $r$ is even, the eigenspaces are invariant with respect to the induced metric almost $f$--structure, $F^{\varphi}$, on the domain and $\varphi$ is $(F^{\varphi}, J)$-holomorphic, i.e. $\dif \varphi \circ F^{\varphi} = J \circ \dif \varphi$. When $F^{\varphi}$ is an almost complex structure we recover the classical  case of holomorphic maps. \\
We have also a corresponding notion of \textit{pseudo horizontally homothetic} (PHH) map \cite{aa, aab}. For submersions, PHH condition reads as $(\nabla^{\HH}_{X}F^{\varphi})(Y) = 0$, $\forall X, Y \in \Ker((F^{\varphi})^2 + I)$. Standard examples of PHH maps are the holomorphic maps between K\"ahler manifolds and the $(\phi,J)$- holomorphic maps from a Sasakian manifold (with associated $f$-structure $\phi$) to a K\"ahler one. \\
For a suitable notion of PHWC/holomorphic map between odd-dimensional manifolds see \cite{slo} and references therein. We mention here only the fact that, according to \cite{ian}, a $(\phi, \phi^{\prime})$-holomorphic map between two contact metric manifolds, $(M^{2m+1}, \phi, \xi, \eta, g)$ and $(N^{2n+1}, \phi^{\prime}, \xi^{\prime}, \eta^{\prime}, h)$, has eigenvalues $\lambda_{1}^{2}=a^2$, $\lambda_{2}^{2}=\lambda_{3}^{2}=$ ... $=\lambda_{r}^{2}=a$ ($a >0$ and $r$ is odd). This kind of map is called \textit{contact homothety} or $\DD$-\textit{homothetic transformation}.

\item (Equal products of paired eigenvalues) When both the domain and codomain of $\varphi$ are symplectic manifolds, $(M^{2m}, \Omega, g)$ and $(N^{2n}, \Omega^{\prime}, h)$, and $\varphi^*\Omega^{\prime}=\Omega$ we say that $\varphi$ is a \textit{symplectomorphism}. According to \cite{medo}, Cauchy-Green's eigenvalues for a symplectomorphism satisfy 
$\lambda_{1}^{2} \cdot \lambda_{2}^{2} = \lambda_{3}^{2}\cdot \lambda_{4}^{2}=$ ... $=\lambda_{r-1}^{2}\cdot\lambda_{r}^{2}=1$ ($r$ is even) and the associated complex structure restricts to an isomorphism between the eigenspaces corresponding to $\lambda_{i}^{2}$ and $\lambda_{i+1}^{2}$, for all odd $i$. It is an easy task to rephrase this fact for 
$\varphi^*\Omega^{\prime}=a\cdot \Omega$ ($a$ being a function on $M$) and to see that the common value of products of eigenvalues must be equal to $a^2$. Notice that the conformal case (1) is included in this case.\\
Moreover one can extend this result for \textit{contactomorphisms}, that is for mappings between contact metric manifolds, $(M^{2m+1}, \eta, g)$ and $(N^{2n+1}, \eta^{\prime}, h)$, satisfying $\varphi^*\eta^{\prime}=a\cdot \eta$. In this case we obtain 
$$\lambda_{1}^{2} = \lambda_{2}^{2} \cdot \lambda_{3}^{2} = ... =\lambda_{r-1}^{2}\cdot\lambda_{r}^{2}=a^2,$$ 
where $r$ is odd and the eigenvector corresponding to $\lambda_{1}^{2}$ must be $\xi$, the Reeb vector field on the domain. Notice that the conformal case (1) is no more included in this case, but only contact homotheties.
\end{enumerate}

\section{Euler-Lagrange equations for $\sigma_2$--energy}

As in the previous section, $\varphi: (M,g) \to (N,h)$ will denote a smooth mapping between Riemannian manifolds. For the rest of the paper we suppose $M$ to be compact (unless otherwise stated) and we denote by $\nu _g$ the volume form of its metric. The distributions $\VV = \Ker \dif \varphi$ and $\HH=\VV^{\perp}$ on $M$ will be called \textit{vertical} and \textit{horizontal} spaces and the projections of a vector field $X$ along these distributions will be denoted as $X^{\VV}$ and $X^{\HH}$, respectively.

\subsection{The first variation formula} Let $\{\varphi_t\}$ be a (smooth) variation of $\varphi$ with variation vector field 
$v \in \Gamma(\varphi^{-1}TN)$, i.e.
$$
v(x) = \displaystyle{\frac{\partial \varphi_t}{\partial t}(x) \Bigl\lvert _{t=0}} \in T_{\varphi(x)}N, \qquad \forall x \in M.
$$ 

In this section, we are looking for critical points of $\sigma_2$--energy, i.e. mappings that satisfy $\dfrac{\dif}{\dif t} \Bigl\lvert _{0} \mathcal{E}_{\sigma_2}(\varphi_t)=0$, for any variation. For simplicity let us call these maps $\sigma_2$--\textit{critical}. Analogously, critical maps for the full functional \eqref{generg} will be called $\sigma_{1,2}$--\textit{critical}. 

\begin{re}\label{31}
($a$) \ To every $v \in \Gamma(\varphi^{-1}TN)$ we associate a vector field on $M$, $X_v \in (\Ker \dif \varphi)^{\perp}$, defined by:
\begin{equation*}
g(X_v, Y)=h(v, \dif \varphi(Y)), \quad \forall Y \in \Gamma(TM).
\end{equation*}
If $\varphi$ is a horizontally conformal (surjective) submersion of dilation $\lambda$, then $v=\lambda^{-2}\dif \varphi(X_v)$.

\medskip
\noindent ($b$) \ Denote $\alpha _v := h\left(\nabla^{\varphi}v, \dif \varphi\right)$ and $\mathrm{div}^{\varphi}v :=\tr \, \alpha _v$, where $\nabla^{\varphi}$ is the pull-back connection in $\varphi^{-1}TN$ (see \cite{ud}). Then it is easy to check that:

\medskip
$(i.)$ \  $\dfrac{\partial}{\partial t}\Bigl\lvert _{t=0} \varphi_{t}^{*}h(Y,Z) 
= \alpha _v (Y, Z) + \alpha _v (Z, Y)$;

\medskip
$(ii.)$ \  $\alpha _v (Y, Z) = g\left(\nabla_{Y}X_v, Z \right) -h(v, \nabla \dif \varphi(Y, Z))$;

\medskip
$(iii.)$ \  $\mathrm{div}^{\varphi}v = 
\mathrm{div}X_v - h(v, \tau(\varphi))$;

\medskip
$(iv.)$ \  $\varphi$ is harmonic if and only if $\mathrm{div}^{\varphi}v = \mathrm{div}X_v, \ \forall v \in \Gamma(\varphi^{-1}TN)$.
\end{re}

In a (local) orthonormal eigenvector frame $\{e_i \}_{i=1,...,m}$ for $\varphi^* h$, using the above remark, we can compute the first derivative of $\mathcal{E}_{\sigma_2}(\varphi_t)$ as follows:
\begin{align*}
\dfrac{\dif}{\dif t} \Bigl\lvert _{0}
\mathcal{E}_{\sigma_2}(\varphi_t)
=& \frac{1}{2}\int_M \sum_{i<j}\dfrac{\dif}{\dif t} \Bigl\lvert _{0} \left[ \norm{\dif \varphi _t(e_i)}^2\norm{\dif \varphi _t(e_j)}^2 
- h(\dif \varphi _t(e_i), \dif \varphi _t(e_j))^2 \right]\nu_ g\\
=& \frac{1}{2}\int_M \sum_{i}\lambda_{i}^{2}\dfrac{\dif}{\dif t} \Bigl\lvert _{0}\left(\abs{\dif \varphi _t}^2-\norm{\dif \varphi _t(e_i)}^2 \right)\nu _g \\
=& \int_M \sum_{i} \lambda_{i}^{2} \left\{ \di ^{\varphi}v -
\alpha _v (e_i, e_i) \right\} \nu _g\\
=& \int_M \left\{2e(\varphi) \di ^{\varphi}v -
\sum_{i} \lambda_{i}^{2} \left[ g(\nabla_{e_i}X_v, e_i)-h(v, \nabla \dif \varphi(e_i, e_i)) \right]\right\} \nu _g\\
=& \int_M  h(v,  -2[e(\varphi)\tau(\varphi) + 
\dif \varphi(\mathrm{grad}e(\varphi))]) \nu _g \\ 
&+\int_M \left\{h\left(v, \sum_i \lambda_{i}^{2} \nabla \dif \varphi (e_i , e_i)\right) - \sum_i \lambda_{i}^{2}g(\nabla_{e_i}X_v , e_i) \right\} \nu _g. 
\end{align*}

Denote $\widetilde X_v = \sum_i \lambda_{i}^{2}g(X_v , e_i)e_i$.
Then:
\begin{equation}\label{tric}
\begin{split}
\mathrm{div}\widetilde X_v - \sum_i \lambda_{i}^{2}g(\nabla_{e_i}X_v , e_i) =&g\left(X_v, \sum_k \left[e_k(\lambda_{k}^{2}) + \sum_i (\lambda_{i}^{2} - \lambda_{k}^{2}) g(\nabla_{e_i}e_i,e_k) \right]e_k\right)\\
=&h(v, \dif \varphi([\mathrm{div}\varphi^*h]^{\sharp})).
\end{split}
\end{equation}

\begin{re}
Let us rewrite two of the terms that appeared above as:

\noindent ($a$) $[\mathrm{div}\varphi^*h]^{\sharp}=\di \Cc_\varphi$;

\medskip
\noindent ($b$) $\sum_i \lambda_{i}^{2} \nabla \dif \varphi (e_i , e_i) = \tr (\nabla \dif \varphi) \circ \Cc_\varphi$, where the right hand term is defined in an \textit{arbitrary} orthonormal frame as follows:
\begin{equation*}
\tr (\nabla \dif \varphi) \circ \Cc_\varphi=
\sum_i \nabla \dif \varphi (e_i , \Cc_\varphi e_i)=
\sum_i \varphi^* h(e_i, e_i) \nabla \dif \varphi (e_i , e_i)
+2 \sum_{i<j} \varphi^* h(e_i, e_j) \nabla \dif \varphi (e_i , e_j).
\end{equation*} 
\end{re}

\begin{de}
We call \textit{$\sigma_2$--tension field} of the map $\varphi$ the following section of the pull-back bundle $\varphi^{-1}TN$:
$$\tau_{\sigma_2}(\varphi)=2[e(\varphi)\tau(\varphi) + 
\dif \varphi(\mathrm{grad}e(\varphi))]-
\tr (\nabla \dif \varphi) \circ \Cc_\varphi 
-\dif \varphi(\di \Cc_\varphi).$$
\end{de}
We have obtained the following (cf. also \cite{cri})

\begin{pr}[The first variation formula]
\begin{equation*}
\dfrac{\dif}{\dif t} \Bigl\lvert _{0}
\mathcal{E}_{\sigma_2}(\varphi_t)
=- \int_M h(v, \tau_{\sigma_2}(\varphi))\nu _g.
\end{equation*}

In particular, a map $\varphi$ is $\sigma_2$--critical if it satisfies the following Euler-Lagrange equations
\begin{equation}\label{EL}
2[e(\varphi)\tau(\varphi) + 
\dif \varphi(\mathrm{grad}e(\varphi))]-
\tr (\nabla \dif \varphi) \circ \Cc_\varphi  
-\dif \varphi(\di \Cc_\varphi)=0.
\end{equation}
\end{pr}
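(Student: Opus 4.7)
The plan is to compute the first variation directly in a local orthonormal eigenvector frame $\{e_i\}$ for $\varphi^{*}h$, which Remark 2.1 guarantees on a dense open subset of $M$. Since $\sigma_2(\varphi)=\tfrac{1}{2}[(\tr\Cc_\varphi)^2-\tr(\Cc_\varphi^2)]$ is intrinsic and the final integrand is globally defined, continuity will extend the identity from that open set to $M$. In such a frame and at $t=0$ one has
\begin{equation*}
\sigma_2(\varphi_t)=\sum_{i<j}\bigl[\norm{\dif\varphi_t(e_i)}^2\norm{\dif\varphi_t(e_j)}^2-h(\dif\varphi_t(e_i),\dif\varphi_t(e_j))^2\bigr];
\end{equation*}
the cross term $h(\dif\varphi_t(e_i),\dif\varphi_t(e_j))$ vanishes at $t=0$ for $i\neq j$, so its square contributes nothing to $\frac{\dif}{\dif t}|_0$. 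Differentiating the surviving term and invoking Remark 3.1(i) gives the intermediate expression $\int_M\sum_i\lambda_i^2\{\di^{\varphi}v-\alpha_v(e_i,e_i)\}\nu_g$ as in the excerpt preceding the statement.

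Next I would regroup this integrand into three pieces by substituting Remark 3.1(ii)--(iii). The first piece $\sum_i\lambda_i^2\di^{\varphi}v=2e(\varphi)(\di X_v-h(v,\tau(\varphi)))$ yields, after Stokes' theorem on the compact $M$ applied to $\int 2e(\varphi)\,\di X_v\,\nu_g=-2\int g(X_v,\gr e(\varphi))\nu_g$ combined with the defining relation $g(X_v,Y)=h(v,\dif\varphi(Y))$, the term $-2\int_M h(v,e(\varphi)\tau(\varphi)+\dif\varphi(\gr e(\varphi)))\nu_g$; this matches the first summand of $-\tau_{\sigma_2}(\varphi)$. The second piece $\sum_i\lambda_i^2 h(v,\nabla\dif\varphi(e_i,e_i))=h(v,\tr(\nabla\dif\varphi)\circ\Cc_\varphi)$ is immediate from the identity recorded in the remark following \eqref{tric}. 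What remains is the residual $-\int_M\sum_i\lambda_i^2 g(\nabla_{e_i}X_v,e_i)\,\nu_g$, which is not itself a divergence.

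To dispose of the residual I would introduce $\widetilde X_v:=\Cc_\varphi(X_v)=\sum_i\lambda_i^2 g(X_v,e_i)e_i$ and expand $\di\widetilde X_v$ by Leibniz: it produces exactly the wanted $\sum_k\lambda_k^2 g(\nabla_{e_k}X_v,e_k)$ plus the defect $g\bigl(X_v,\sum_k[e_k(\lambda_k^2)+\sum_i(\lambda_i^2-\lambda_k^2)g(\nabla_{e_i}e_i,e_k)]e_k\bigr)$. The content of \eqref{tric} is that this defect equals $h(v,\dif\varphi(\di\Cc_\varphi))$. After Stokes removes the $\di\widetilde X_v$ contribution the residual becomes precisely the fourth summand of $-\tau_{\sigma_2}(\varphi)$, and the three pieces assemble into the stated formula.

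The main obstacle is the proof of \eqref{tric}, i.e.\ showing that the defect produced by Leibniz is really the intrinsic $\dif\varphi(\di\Cc_\varphi)$. This requires rewriting $\di\Cc_\varphi$ in the eigenvector frame using the symmetry of $\Cc_\varphi$ and the torsion-freeness of $\nabla$, and reconciling the frame-dependent terms $e_k(\lambda_k^2)$ and $(\lambda_i^2-\lambda_k^2)g(\nabla_{e_i}e_i,e_k)$ with a frame-free expression. A subsidiary concern is that the eigenvector frame and the $\lambda_i^2$ are only smooth on a dense open set, but since $\tau_{\sigma_2}(\varphi)$ is a globally defined smooth section of $\varphi^{-1}TN$ (built from $\Cc_\varphi$, $\nabla\dif\varphi$, $e(\varphi)$), the pairing $h(v,\tau_{\sigma_2}(\varphi))$ is continuous on all of $M$ and the identity extends from the open set to the whole manifold.
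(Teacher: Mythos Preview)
Your proposal is correct and follows essentially the same route as the paper: working in a local orthonormal eigenvector frame, reaching the intermediate expression $\int_M\sum_i\lambda_i^2\{\di^{\varphi}v-\alpha_v(e_i,e_i)\}\nu_g$, splitting via Remark~3.1(ii)--(iii), and handling the residual $-\sum_i\lambda_i^2 g(\nabla_{e_i}X_v,e_i)$ by introducing $\widetilde X_v=\Cc_\varphi(X_v)$ and invoking \eqref{tric}. Your additional remarks on the dense-open-set issue and the global smoothness of $\tau_{\sigma_2}(\varphi)$ are a reasonable supplement the paper leaves implicit.
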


\begin{re}
The Euler-Lagrange operator of $\mathcal{E}_{\sigma_p}$ has been derived in \cite{cri} for all $p$:
\begin{equation*}
\tau_{\sigma_p}(\varphi)=\mathrm{trace}\nabla(\dif \varphi \circ \chi_{p-1}(\varphi)),
\end{equation*}

\noindent where $\chi_{p-1}(\varphi)$ is the \textit{Newton tensor}. In $p=2$ case, $\chi_{1}(\varphi)=2e(\varphi)Id_{TM}-\dif \varphi^{t} \circ \dif \varphi$ and then we can easily obtain the equation \eqref{EL}. Nevertheless, in this particular case, we preferred to derive the first variation \textit{ab initio}, for the sake of completeness (as it might be difficult to access \cite{cri}). Recall also that $\tau_{\sigma_2}$ is elliptic on $\{\varphi \in \mathcal{C}^2(M,N) \vert \mathrm{rank} \varphi > 2\}$, cf. \cite{cri}.

Analogously to the harmonic map problem, Euler-Lagrange equations can be written (at least for submersions) in the conservative form $\di S_{\sigma_2}(\varphi)=0$, where
\begin{equation*}
S_{\sigma_2}(\varphi)=\frac{1}{2}\sigma_2(\varphi)g - \varphi^* h \circ \chi_{1}(\varphi),
\end{equation*}
is the $\sigma_2$ -- \textit{stress-energy tensor}, cf. \cite[p. 44]{cri}.
\end{re}

\subsection{Consequences of the first variation formula}

Let us take a look firstly to the simplest (non-trivial) case, namely $\mathrm{dim}N=2$, so that $\varphi$ will have (at most) two distinct eigenvalues.
\begin{co} \label{2di}
Let $\varphi: (M^m,g) \to (N^2,h)$ be a submersion taking values in a surface $(m\geq 2)$. Let $\lambda_{1,2}: M \to (0, \infty)$ denote the (positive) square roots of eigenvalues of its Cauchy-Green tensor and let $\mu^{\VV}$ denote the mean curvature vector field of its fibers. Then $\varphi$ is $\sigma_2$-critical if and only if the following equation is satisfied:
\begin{equation}\label{fh}
\gr^{\HH}(\ln \lambda_{1}\lambda_{2}) - (m-2)\mu^{\VV}=0.
\end{equation}
In particular a (local) diffeomorphism $\varphi: (M^2,g) \to (N^2,h)$ is 
$\sigma_2$-critical if and only if $\lambda_{1}\lambda_{2} \equiv const.$, i.e. it preserves areas up to a constant factor. 
\end{co}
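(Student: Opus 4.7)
The plan is to choose, at a regular point, a local orthonormal eigenframe $\{e_1,e_2,e_3,\dots,e_m\}$ with $e_1,e_2$ spanning $\HH$ (eigenvalues $\lambda_1^2,\lambda_2^2>0$) and $e_3,\dots,e_m$ spanning $\VV$, and to set $f_\alpha:=\lambda_\alpha^{-1}\dif\varphi(e_\alpha)$ for $\alpha=1,2$, so that $\{f_1,f_2\}$ is an orthonormal frame of $\varphi^{-1}TN$. The aim is to evaluate the two components of $\tau_{\sigma_2}(\varphi)$ along $f_1,f_2$ and read off \eqref{fh}.

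First I split the tension field: since $\dif\varphi(e_\beta)=0$ for $\beta\geq 3$, one has $\nabla\dif\varphi(e_\beta,e_\beta)=-\dif\varphi(\nabla_{e_\beta}e_\beta)$, hence
\[
\tau(\varphi)=\sum_{\alpha=1,2}\nabla\dif\varphi(e_\alpha,e_\alpha)-(m-2)\dif\varphi(\mu^{\VV}).
\]
A pleasant algebraic cancellation then collapses the second-order contribution to $\tau_{\sigma_2}(\varphi)$: the combination $2e(\varphi)\tau(\varphi)-\tr(\nabla\dif\varphi)\circ\Cc_\varphi$ on the horizontal terms equals simply $\lambda_2^2\nabla\dif\varphi(e_1,e_1)+\lambda_1^2\nabla\dif\varphi(e_2,e_2)$, while on the vertical terms it yields $-(m-2)(\lambda_1^2+\lambda_2^2)\dif\varphi(\mu^{\VV})$. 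For the last term I reuse the identity already obtained on the way to \eqref{tric}, namely $\di\Cc_\varphi=\sum_k\big[e_k(\lambda_k^2)+\sum_i(\lambda_i^2-\lambda_k^2)g(\nabla_{e_i}e_i,e_k)\big]e_k$, of which only the horizontal part matters after composing with $\dif\varphi$.

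Expanding $\nabla\dif\varphi(e_\alpha,e_\alpha)$ in $\{f_1,f_2\}$ unavoidably produces terms built from the connection of $N$ (essentially the geodesic curvatures of the integral curves of $f_1$ and $f_2$), which at first sight have no $M$-side partners; the principal obstacle is eliminating them. My resolution is to exploit the symmetry of the second fundamental form $\nabla\dif\varphi(e_1,e_2)=\nabla\dif\varphi(e_2,e_1)$: expanding both sides in $\{f_1,f_2\}$ yields two identities that express the $N$-connection coefficients in terms of $e_\alpha(\lambda_\beta)$ and of the off-diagonal Christoffel-type coefficients $g(\nabla_{e_\alpha}e_\alpha,e_{3-\alpha})$. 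Substituting these into the $f_1$- and $f_2$-components of $\tau_{\sigma_2}(\varphi)$ makes both the $N$-connection terms and those off-diagonal pieces cancel outright.

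What survives is the $f_1$-component $\lambda_1\lambda_2^2\big[e_1(\ln\lambda_1\lambda_2)-(m-2)g(\mu^{\VV},e_1)\big]$, and similarly for $f_2$. Since $\lambda_1\lambda_2\neq 0$ by the submersion hypothesis, the vanishing of $\tau_{\sigma_2}(\varphi)$ is equivalent to \eqref{fh}. In the diffeomorphism case $m=n=2$ the vertical distribution is trivial, so $\mu^{\VV}=0$ and \eqref{fh} reduces to $\gr(\ln\lambda_1\lambda_2)=0$; since $\lambda_1\lambda_2=v(\varphi)$ is (the absolute value of) the Jacobian of $\varphi$, this is precisely the condition that $\varphi$ rescales areas by a constant factor.
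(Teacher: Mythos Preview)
Your argument is correct and follows essentially the same route as the paper: choose an orthonormal eigenframe, reduce $\tau_{\sigma_2}(\varphi)$ to $\lambda_2^2\nabla\dif\varphi(e_1,e_1)+\lambda_1^2\nabla\dif\varphi(e_2,e_2)-(m-2)(\lambda_1^2+\lambda_2^2)\dif\varphi(\mu^{\VV})+\dif\varphi(\gr 2e(\varphi))-\dif\varphi(\di\Cc_\varphi)$, and then test against $\dif\varphi(e_1),\dif\varphi(e_2)$. The only difference is bookkeeping: the paper imports from \cite{slub} the ready-made identities $h(\nabla\dif\varphi(e_k,e_k),\dif\varphi(e_k))=\tfrac12 e_k(\lambda_k^2)$ and $h(\nabla\dif\varphi(e_i,e_i),\dif\varphi(e_k))=-\tfrac12 e_k(\lambda_i^2)+(\lambda_i^2-\lambda_k^2)g(\nabla_{e_i}e_i,e_k)$ (together with the analogous formula for $\di\varphi^*h$), whereas you recover the same cancellations in a self-contained way by exploiting the symmetry $\nabla\dif\varphi(e_1,e_2)=\nabla\dif\varphi(e_2,e_1)$ to eliminate the $N$-connection coefficients---which is exactly how those identities are proved in \cite{slub} in the first place.
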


\begin{proof}
Let $\{ E_1 , E_2 , E_{\gamma} \} _{\gamma = 1, 2, ..., m-2}$ a local orthonormal frame of eigenvector fields for $\varphi^* h$, corresponding to the eigenvalues $\lambda_{1}^{2}$, $\lambda_{2}^{2}$ and 0, respectively (i.e. $E_{\gamma}$'s span $\VV = \mathrm{Ker} \dif \varphi$). Since $\dif \varphi (E_1)$ and $\dif \varphi (E_2)$ are orthogonal, $\varphi$ will be $\sigma_2$--critical iff:
\begin{equation}\label{sufcond}
h(\tau_{\sigma_2}(\varphi), \dif \varphi (E_1))=0 \quad \text{and} \quad 
h(\tau_{\sigma_2}(\varphi), \dif \varphi (E_2))=0.
\end{equation}

An easy simplification shows us that:
\begin{equation*}
\begin{split}
\tau_{\sigma_2}(\varphi) =& 
\lambda_{1}^{2}\nabla \dif \varphi(E_2, E_2) + 
\lambda_{2}^{2}\nabla \dif \varphi(E_1, E_1) -
(m-2)(\lambda_{1}^{2} + \lambda_{2}^{2})\dif \varphi \left( \mu^{\VV} \right)\\
& + \dif \varphi \left( \gr(\lambda_{1}^{2} + \lambda_{2}^{2}) \right)
- \dif \varphi \left( [\di \varphi^* h]^\sharp \right).
\end{split}
\end{equation*}

Recall that, according to \cite{slub}, we have:
\begin{align*}
\left(\mathrm{div}\varphi^{*}h\right)(E_{1}) &= E_{1}(\lambda_{1}^{2})+(\lambda_{2}^{2}-\lambda_{1}^{2})g(\nabla_{E_{2}}E_{2}, E_{1}) - (m-2)\lambda_{1}^{2}g(\mu^{\VV}, E_{1});\\
\left(\mathrm{div}\varphi^{*}h\right)(E_{2}) &= E_{2}(\lambda_{2}^{2})+(\lambda_{1}^{2}-\lambda_{2}^{2})g(\nabla_{E_{1}}E_{1}, E_{2}) - (m-2)\lambda_{2}^{2}g(\mu^{\VV}, E_{2})
\end{align*}
and also, according to \cite[Lemma 1]{slub}:
\begin{align*}
h(\nabla \dif \varphi (E_{k},E_{k}), \dif \varphi (E_{k})) &= 
\frac{1}{2} E_{k}(\lambda_{k}^{2}), \quad \forall k\\
h( \nabla \dif \varphi (E_{i}, E_{i}), \dif \varphi (E_{k})) &= -\frac{1}{2} E_{k}(\lambda_{i}^{2}) +
(\lambda_{i}^{2}-\lambda_{k}^{2})g(\nabla_{E_{i}} E_{i}, \, E_{k}), \quad \forall i \neq k. 
\end{align*}

Putting all together, we translate \eqref{sufcond} as following: 
\begin{equation}\label{sil}
\left\{
\begin{array}{ccc}
\frac{1}{2}\lambda_{1}^{2}E_1 (\lambda_{2}^{2}) +
\frac{1}{2}\lambda_{2}^{2}E_1 (\lambda_{1}^{2}) - 
(m-2)\lambda_{1}^{2}\lambda_{2}^{2}g(\mu^{\VV}, E_1) &=& 0\\[3mm]
\frac{1}{2}\lambda_{1}^{2}E_2 (\lambda_{2}^{2}) +
\frac{1}{2}\lambda_{2}^{2}E_2 (\lambda_{1}^{2}) - 
(m-2)\lambda_{1}^{2}\lambda_{2}^{2}g(\mu^{\VV}, E_2) &=& 0
\end{array}
\right.
\end{equation}
from which the result follows. 
\end{proof}

Before letting the geometry coming in, it worth to see a simple example in the flat case (note that the notion of $\sigma_2$-critical map can be extended also to noncompact domains by imposing the first variation to be zero on any compact sub-domain).
\begin{ex}\label{hen}
The H\'enon map \cite{hen} $H: \RR^2 \to \RR^2$ is given by $H(x, y)=(y+1-ax^2 , bx)$, where $a,b$ are real parameters and $b\neq 0$.
It is easy to check that in this case we have:
$\lambda_{1}^{2}\lambda_{2}^{2} = det(J_{H}^{t}J_H)=b^2$, so $H$ is a $\sigma_2$-critical map.\\
Let us notice that $H$ is moreover harmonic/holomorphic if and only if $a=0$ and $b=1$, that is when it is an isometry.
\end{ex}
More examples of area-preserving maps (up to constants) are to be found in \cite{mac}. An elementary example in non-flat case, is given by the map between 2-spheres $\left(\cos u , \ \sin u \cdot e^{\ii v}\right) \mapsto \left(\cos u , \ \sin u \cdot e^{\ii kv}\right)$, where $k$ is an integer that gives the degree of the map.

Example \ref{hen} reflects a more general fact, as we can easily check the following:
\begin{re}
A holomorphic map $f: D \subset \CC \to \CC$ is $\sigma_2$-critical if and only if it is homothetic. 
\end{re}
This remark suggests us that it might be difficult to find topologically interesting mappings that are both harmonic and $\sigma_2$-critical. However, this is the case of the identity map between 3-spheres and standard Hopf map, the only exact solutions known for Skyrme and Faddeev models.

The next corollary follows in full general context the interplay between harmonicity/holomorphicity and $\sigma_2$-criticality. Compare it with the results in \cite{slo} referring to another generalization of the Faddeev model.

\begin{co}\label{crit}

\noindent $(i)$ Any \emph{totally geodesic map} is $\sigma_2$--critical.

\medskip
\noindent $(ii)$ A \emph{harmonic map} $\varphi$ is $\sigma_2$--critical if and only if, at any point it satisfies:
\begin{equation}\label{ELha}
\dif \varphi(\mathrm{grad}e(\varphi))=
\sum_i \lambda_{i}^{2} \nabla \dif \varphi (e_i , e_i),
\end{equation}
where $\{e_i\}$ is a local orthonormal frame of eigenvectors for $\varphi^* h$ around that point.

\medskip
\noindent $(iii)$ A \emph{pseudo horizontally homothetic harmonic map} $\varphi$ to a K\"ahler manifold is $\sigma_2$--critical if and only if \ $\gr e(\varphi) \in \Ker \dif \varphi$. \\
In particular, any \emph{holomorphic map} between K\"ahler manifolds (or $(\phi,J)$-holomorphic from Sasaki to K\"ahler) which has constant Dirichlet energy density, is also $\sigma_2$--critical. 
\end{co}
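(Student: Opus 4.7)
The plan is to attack the three parts by working directly with the $\sigma_2$-tension field
$$\tau_{\sigma_2}(\varphi)=2[e(\varphi)\tau(\varphi)+\dif \varphi(\gr e(\varphi))]-\tr(\nabla \dif \varphi)\circ \Cc_\varphi - \dif \varphi(\di \Cc_\varphi),$$
peeling off one term at a time as successively stronger hypotheses are imposed.

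For (i), totally geodesic means $\nabla \dif \varphi\equiv 0$. Then $\tau(\varphi)=\tr\nabla\dif\varphi=0$; the identity $\dif e(\varphi)(X)=\sum_i h(\nabla \dif \varphi(X,e_i),\dif \varphi(e_i))$ forces $e(\varphi)$ to be constant, hence $\gr e(\varphi)=0$; the trace $\sum_i \nabla \dif \varphi(e_i,\Cc_\varphi e_i)$ vanishes termwise; and a short computation shows $\nabla(\varphi^*h)\equiv 0$, so $\di \Cc_\varphi=0$. All four summands of $\tau_{\sigma_2}(\varphi)$ vanish independently.

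For (ii), I would first establish the auxiliary identity
\begin{equation*}
(\di \varphi^* h)(X) = h(\tau(\varphi),\dif \varphi(X)) + X(e(\varphi)),
\end{equation*}
obtained in a local orthonormal frame (normal at the point) by expanding $(\nabla_{e_i}\varphi^*h)(e_i,X)$ and exploiting the symmetry of $\nabla \dif \varphi$ together with $\sum_i h(\nabla \dif \varphi(X,e_i),\dif \varphi(e_i))=\frac{1}{2}X|\dif \varphi|^2$. When $\tau(\varphi)=0$ this collapses to $[\di \varphi^* h]^\sharp=\gr e(\varphi)$, so $\dif \varphi(\di \Cc_\varphi)=\dif \varphi(\gr e(\varphi))$. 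Substituting and evaluating $\tr(\nabla \dif \varphi)\circ \Cc_\varphi$ in an eigenframe produces
\begin{equation*}
\tau_{\sigma_2}(\varphi)=\dif \varphi(\gr e(\varphi)) - \sum_i \lambda_i^2 \nabla \dif \varphi(e_i,e_i),
\end{equation*}
whose vanishing is precisely equation \eqref{ELha}.

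For (iii), building on (ii), it suffices to show that for a PHH map to a K\"ahler manifold the weighted sum $\sum_i \lambda_i^2 \nabla \dif \varphi(e_i,e_i)$ already vanishes, reducing \eqref{ELha} to $\dif \varphi(\gr e(\varphi))=0$. Because of the pairing $\lambda_{2k-1}^2=\lambda_{2k}^2$ and the $F^\varphi$-invariance of horizontal eigenspaces, the eigenframe can be chosen with $e_{2k}=F^\varphi e_{2k-1}$. Differentiating the $(F^\varphi,J)$-holomorphicity $\dif \varphi\circ F^\varphi=J\circ \dif \varphi$, and using $\nabla J=0$ (K\"ahler) together with the PHH condition $\nabla^\HH F^\varphi=0$, yields $\nabla \dif \varphi(X,F^\varphi Y)=J\,\nabla \dif \varphi(X,Y)$ for horizontal $X,Y$; iterating once and invoking symmetry of $\nabla \dif \varphi$ gives $\nabla \dif \varphi(F^\varphi X,F^\varphi X)=-\nabla \dif \varphi(X,X)$, so the paired contributions cancel. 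The particular cases of holomorphic maps between K\"ahler manifolds and $(\phi,J)$-holomorphic maps from a Sasakian manifold then follow, since both are PHH and harmonic, and constant $e(\varphi)$ trivially gives $\gr e(\varphi)=0\in\Ker \dif \varphi$.

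The delicate step will be (iii): one must be careful that the orthonormal eigenframe can be arranged to be $F^\varphi$-adapted (which is where the PHH hypothesis and the equality of paired eigenvalues are genuinely used), and that the differentiation argument works without stray terms from $\nabla F^\varphi$ acting on the vertical part. Parts (i) and (ii) are essentially bookkeeping, once the divergence identity above is in hand.
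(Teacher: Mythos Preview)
Your proposal is correct and follows essentially the same route as the paper: in (i) every summand of $\tau_{\sigma_2}$ vanishes separately; in (ii) the stress--energy identity $\di\varphi^*h=\dif e(\varphi)+h(\tau(\varphi),\dif\varphi)$ collapses two terms; and in (iii) the PHH hypothesis yields the pairwise cancellation $\nabla\dif\varphi(X,X)+\nabla\dif\varphi(F^\varphi X,F^\varphi X)=0$. The only difference is that you actually derive this last identity from the $(F^\varphi,J)$--holomorphicity, $\nabla J=0$, and the PHH condition, whereas the paper simply invokes it as a known consequence of PHH.
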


\begin{proof} $(i)$ By definition, a totally geodesic map satisfies $\nabla \dif \varphi=0$. In this case it is known that $\varphi ^* h$ is parallel and its eigenvalues are constant. Consequently every term in \eqref{EL} cancels.

\medskip

\noindent $(ii)$ Recall that, for any smooth map we have 
the identity (cf. \cite[Lemma 3.4.5]{ud}):
\begin{equation}\label{stres}
\mathrm{div}S({\varphi})= \dif e(\varphi) - \mathrm{div}\varphi^*h =
-h(\tau(\varphi), \dif \varphi),
\end{equation}
where $S({\varphi}):= e(\varphi)g - \varphi^*h$ is the \textit{stress-energy tensor} of the map.

In particular, for a harmonic map we have $\tau(\varphi)=0$, so $\dif \varphi(\mathrm{grad}e(\varphi))=\dif \varphi(\di \Cc_\varphi)$, relation that simplifies \eqref{EL} to \eqref{ELha}.

\medskip

\noindent $(iii)$ As for any PHWC mapping the eigenvalues of $\varphi^* h$ are double, according to $(ii)$ a PHH harmonic map must satisfy:
\begin{equation*}
\dif \varphi(\mathrm{grad}e(\varphi))=
\sum_i \lambda_{i}^{2} [\nabla \dif \varphi (e_i , e_i) + \nabla \dif \varphi (F^{\varphi}e_i , F^{\varphi}e_i)].
\end{equation*}
But PHH hypothesis assures precisely that $\nabla \dif \varphi (X , X) + \nabla \dif \varphi (F^{\varphi}X , F^{\varphi}X)=0$, $\forall X \in (\Ker \dif \varphi) ^{\perp}$. Then our conclusion easily follows.
\end{proof}
Recall that a map that satisfies $\gr e(\varphi) \in \Ker \dif \varphi$ is called $\infty$-\textit{harmonic}. For more details and examples see \cite{ou}. Notice also that when $M=\Ss^3$ and $N=\Ss^2$, a map that satisfies $(iii)$ must be a harmonic morphism with constant dilation; so  essentially it is the Hopf map, according to \cite{bai}. But Hopf map is a particular case of Boothby-Wang fibration as it will be pointed out below.

\medskip
As for the interplay between conformality and $\sigma_2$-critical maps, we have (compare again with \cite{slo}):

\begin{co}\label{crit+} Let $\varphi: (M^m,g) \to (N^n,h)$ be a \emph{horizontally conformal submersion} with dilation $\lambda$. Let $\mu^{\VV}$ denote the mean curvature vector field of its fibers. Then $\varphi$ is $\sigma_2$--critical if and only if it is $4$-harmonic, that is:
\begin{equation}\label{4morph}
(n-4)\mathrm{grad}^{\HH}(\mathrm{ln} \lambda)+(m-n)\mu^{\VV}=0.
\end{equation}
In particular,

\noindent $(i)$ a \emph{conformal diffeomorphism} is  $\sigma_2$--critical if and only if it is homothetic. 

\medskip
\noindent $(ii)$ a \emph{horizontally homothetic submersion} is  $\sigma_2$--critical if and only if it has minimal fibres. 

\medskip
\noindent $(iii)$ a \emph{horizontally conformal submersion onto a four-manifold} is $\sigma_2$--critical if and only if it has minimal fibres.
\end{co}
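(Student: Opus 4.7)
The plan is to specialize the Euler--Lagrange operator \eqref{EL} to a horizontally conformal submersion and recognize the outcome as the $4$-harmonic equation \eqref{21}. For such a $\varphi$ with dilation $\lambda$, the Cauchy--Green endomorphism $\Cc_\varphi$ acts as $\lambda^2$ on $\HH$ and as $0$ on $\VV$; thus $e(\varphi) = \tfrac{n}{2}\lambda^2$, $\abs{\dif \varphi}^2 = n\lambda^2$, and the composite $\tr(\nabla \dif \varphi)\circ\Cc_\varphi$ reduces to $\lambda^2\tau^{\HH}(\varphi)$, where $\tau^{\HH}(\varphi) := \sum_{\alpha}\nabla \dif \varphi(e_\alpha,e_\alpha)$ runs over a horizontal orthonormal frame. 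A short computation of the vertical contribution to the full tension (each vertical vector $e_v$ contributes $-\dif \varphi((\nabla_{e_v}e_v)^{\HH})$) yields $\tau^{\HH}(\varphi) = \tau(\varphi) + (m-n)\dif \varphi(\mu^{\VV})$.

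The most delicate ingredient is the divergence term $\dif \varphi(\di \Cc_\varphi)$. I would derive it from identity \eqref{stres}, which as a $1$-form statement reads $\di \varphi^{*}h = \dif e(\varphi) + h(\tau(\varphi),\dif \varphi)$. Raising indices gives $\di \Cc_\varphi = \gr e(\varphi) + X_{\tau(\varphi)}$, with $X_v$ the horizontal associate introduced in Remark \ref{31}(a); for a horizontally conformal submersion the same remark gives $\dif \varphi(X_v) = \lambda^2 v$. Consequently $\dif \varphi(\di \Cc_\varphi) = n\lambda^2\dif \varphi(\gr^{\HH}\ln\lambda) + \lambda^2\tau(\varphi)$.

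Substituting all four pieces into \eqref{EL} and invoking the standard formula $\tau(\varphi) = -(n-2)\dif \varphi(\gr^{\HH}\ln\lambda) - (m-n)\dif \varphi(\mu^{\VV})$ for the tension of a horizontally conformal submersion (so that $\tau^{\HH}(\varphi) = -(n-2)\dif \varphi(\gr^{\HH}\ln\lambda)$), the expression should collapse to
$$\tau_{\sigma_2}(\varphi) \;=\; (n-1)\lambda^2\bigl[\tau(\varphi) + 2\dif \varphi(\gr^{\HH}\ln\lambda)\bigr].$$
Since $\abs{\dif \varphi}^2 = n\lambda^2$ implies $\gr\ln\abs{\dif \varphi} = \gr^{\HH}\ln\lambda$, the bracket is, up to the non-vanishing factor $\tfrac{n-1}{n}\abs{\dif \varphi}^2$, precisely the operator \eqref{21}; thus $\sigma_2$-criticality is equivalent to $4$-harmonicity. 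Expanding this bracket with the explicit tension formula gives $(4-n)\dif \varphi(\gr^{\HH}\ln\lambda) - (m-n)\dif \varphi(\mu^{\VV}) = 0$, and injectivity of $\dif \varphi$ on $\HH$ (both $\gr^{\HH}\ln\lambda$ and $\mu^{\VV}$ being horizontal) delivers \eqref{4morph}.

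The three corollaries then follow by inspection of \eqref{4morph}: $(i)$ $m=n$ kills the fibre term, forcing $\lambda$ to be constant whenever $n\neq 4$; $(ii)$ the horizontally homothetic hypothesis kills the first term, forcing $\mu^{\VV}=0$; $(iii)$ $n=4$ annihilates the coefficient of $\gr^{\HH}\ln\lambda$, leaving $(m-4)\mu^{\VV}=0$. The main obstacle in the proof is organizing $\dif \varphi(\di \Cc_\varphi)$ cleanly; the device that makes it painless is to invoke the stress-energy identity \eqref{stres} rather than to unpack $\di \Cc_\varphi$ in a local eigenframe (along the lines of the computation \eqref{tric}). A secondary point is to either cite or briefly reprove the tension formula for horizontally conformal submersions.
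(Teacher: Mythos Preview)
Your strategy coincides with the paper's: specialize \eqref{EL} to a horizontally conformal submersion, invoke the tension formula $\tau(\varphi)=-\dif\varphi\bigl((n-2)\gr\ln\lambda+(m-n)\mu^{\VV}\bigr)$, and read off \eqref{4morph}. The one organizational difference is in how the Cauchy--Green terms are handled. You compute $\tr(\nabla\dif\varphi)\circ\Cc_\varphi$ and $\dif\varphi(\di\Cc_\varphi)$ separately, bringing in the stress-energy identity \eqref{stres} and the associate $X_{\tau(\varphi)}$ for the latter. The paper instead recombines them via the Leibniz rule as $\tr\nabla(\dif\varphi\circ\Cc_\varphi)=\tr\nabla(\lambda^2\dif\varphi)=\lambda^2\tau(\varphi)+\dif\varphi(\gr\lambda^2)$, which is a one-line computation and bypasses both the stress-energy identity and the decomposition of $\tau$ into $\tau^{\HH}+\tau^{\VV}$. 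Your route is perfectly valid and lands on the same expression $\tau_{\sigma_2}(\varphi)=(n-1)\lambda^2[\tau(\varphi)+2\dif\varphi(\gr^{\HH}\ln\lambda)]$; the paper's is just shorter. One small slip: $\abs{\dif\varphi}^2=n\lambda^2$ gives $\gr\ln\abs{\dif\varphi}=\gr\ln\lambda$, not $\gr^{\HH}\ln\lambda$; the vertical part is only killed once you apply $\dif\varphi$, which is all that is needed to match \eqref{21}.
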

\begin{proof}
For a HC submersion we have $\Cc_{\varphi} \vert _{\HH}=\lambda^2 Id$, where $\HH$ is the horizontal distribution. So the terms involving the Cauchy-Green tensor in \eqref{EL} are equal to
$$
\tr (\nabla \dif \varphi) \circ \Cc_\varphi  
+\dif \varphi(\di \Cc_\varphi)= \tr \nabla (\dif \varphi \circ \Cc_\varphi)=\lambda^2 \tau(\varphi) + \dif \varphi(\gr \lambda^2).
$$
Recall that for HC submersions of dilation $\lambda$ the tension field is given by \cite[Prop. 4.5.3]{ud}:
$$
\tau(\varphi)=-\dif \varphi \left((n-2)\gr \ln \lambda + (m-n)\mu^{\VV}\right).
$$
Replacing the two above identities in \eqref{EL} and taking into account that $e(\varphi)=(n/2)\lambda^2$ we get the equation \eqref{4morph}. \end{proof}
Recall that a horizontally conformal submersion that satisfies \eqref{4morph} is called $4$-\textit{harmonic morphism}. For more details and examples see \cite{bur}.

Let us point out the following class of examples for both Corollary \ref{crit} $(iii)$ and Corollary \ref{crit+} $(ii)$:
\begin{ex}\label{BWfibr}
A Boothby-Wang fibration (see \cite{ble}) of a compact, regular contact manifold $(M, \eta)$ over a K\"ahler (or just almost K\"ahler) manifold $(N, J, h)$ is a harmonic $\sigma_2$-critical map, as the total space is endowed with the metric $g=\varphi^* h + \eta \otimes \eta$, so that the fibration is a Riemannian submersion (so $e(\varphi)$ is constant) with minimal fibers. Note that all Hopf fibrations belong to this class of examples.
\end{ex}

\medskip

As the Skyrme model deals with maps taking values in the 3-sphere, let us particularize our Euler-Lagrange equations to the 3-dimensional target case. Analogously to the Corollary \ref{2di}, we can establish:

\begin{co} 
Let $\varphi: (M^m,g)\to (N^3,h)$ be a submersion to a three-manifold. Let $\lambda_{1}^{2}$, $\lambda_{2}^{2}$, $\lambda_{3}^{2}$ be the (non-zero) eigenvalues of its Cauchy-Green tensor and $\mu^{\VV}$ the mean curvature field of its fibers. Then $\varphi$ is $\sigma_2$-critical if and only if:
\begin{equation}\label{sig3}
\left\{
\begin{array}{ccc}
\frac{1}{2}E_1 (\lambda_{1}^{2}\lambda_{2}^{2} +\lambda_{1}^{2}\lambda_{3}^{2} - \lambda_{2}^{2}\lambda_{3}^{2}) +
\lambda_{3}^{2}(\lambda_{2}^{2} - \lambda_{1}^{2})\Gamma_{22}^{1} + \lambda_{2}^{2}(\lambda_{3}^{2}-\lambda_{1}^{2})\Gamma_{33}^{1} -
(m-3)\lambda_{1}^{2}(\lambda_{2}^{2} + \lambda_{3}^{2}) g(\mu^{\VV}, E_1)&=&0\\[3mm]
\frac{1}{2}E_2 (\lambda_{1}^{2}\lambda_{2}^{2} + \lambda_{2}^{2}\lambda_{3}^{2} - \lambda_{1}^{2}\lambda_{3}^{2}) +
\lambda_{1}^{2}(\lambda_{3}^{2}-\lambda_{2}^{2})\Gamma_{33}^{2} +
\lambda_{3}^{2}(\lambda_{1}^{2}-\lambda_{2}^{2})\Gamma_{11}^{2} - (m-3)\lambda_{2}^{2}(\lambda_{1}^{2} + \lambda_{3}^{2}) g(\mu^{\VV}, E_2)&=&0\\[3mm]
\frac{1}{2}E_3 (\lambda_{2}^{2}\lambda_{3}^{2} +\lambda_{1}^{2}\lambda_{3}^{2} - \lambda_{1}^{2}\lambda_{2}^{2}) +
\lambda_{2}^{2}(\lambda_{1}^{2}-\lambda_{3}^{2})\Gamma_{11}^{3} + \lambda_{1}^{2}(\lambda_{2}^{2}-\lambda_{3}^{2})\Gamma_{22}^{3} -
(m-3)\lambda_{3}^{2}(\lambda_{1}^{2} + \lambda_{2}^{2}) g(\mu^{\VV}, E_3)&=&0
\end{array}
\right.
\end{equation}
where $\Gamma_{ij}^{k} := g(\nabla_{E_i}E_j, E_k)$.
\end{co}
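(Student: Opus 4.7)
The strategy mirrors that of Corollary \ref{2di}: work in a local orthonormal eigenvector frame for $\varphi^{\ast}h$ and express the vanishing of $\tau_{\sigma_2}(\varphi)$ componentwise against the horizontal frame $\dif\varphi(E_k)$. Choose $\{E_1,E_2,E_3,E_\gamma\}_{\gamma=1,\ldots,m-3}$ so that $E_1,E_2,E_3$ diagonalise $\varphi^{\ast}h$ with eigenvalues $\lambda_1^2,\lambda_2^2,\lambda_3^2>0$ and $E_\gamma\in\VV$. Because $\varphi$ is a submersion to a $3$-manifold, $\{\dif\varphi(E_k)\}_{k=1,2,3}$ form an orthogonal basis of $\varphi^{-1}TN$, so $\tau_{\sigma_2}(\varphi)\equiv 0$ is equivalent to the three scalar conditions
\begin{equation*}
h(\tau_{\sigma_2}(\varphi),\dif\varphi(E_k))=0,\qquad k=1,2,3.
\end{equation*}

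The next step is to rewrite $\tau_{\sigma_2}(\varphi)$ in this frame, in complete analogy with the expansion used in the proof of Corollary \ref{2di}. Using $2e(\varphi)=\Lambda:=\lambda_1^2+\lambda_2^2+\lambda_3^2$, the fact that $\Cc_\varphi E_k=\lambda_k^2 E_k$ and $\Cc_\varphi E_\gamma=0$, and the identity $\tau(\varphi)=\sum_{k=1}^{3}\nabla\dif\varphi(E_k,E_k)-(m-3)\dif\varphi(\mu^{\VV})$, one gets
\begin{equation*}
\tau_{\sigma_2}(\varphi)=\sum_{k=1}^{3}(\Lambda-\lambda_k^2)\,\nabla\dif\varphi(E_k,E_k)-(m-3)\Lambda\,\dif\varphi(\mu^{\VV})+\dif\varphi(\gr\Lambda)-\dif\varphi([\di\varphi^{\ast}h]^{\sharp}).
\end{equation*}
Observe that $\Lambda-\lambda_k^2=\lambda_i^2+\lambda_j^2$ for $\{i,j\}=\{1,2,3\}\setminus\{k\}$, which is exactly the coefficient structure that produces the symmetric combinations on the diagonal in \eqref{sig3}.

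Now take the $h$-inner product with $\dif\varphi(E_k)$ and evaluate each piece by means of the two lemmas from \cite{slub} already invoked in the proof of Corollary \ref{2di}: the expression for $(\di\varphi^{\ast}h)(E_k)$ in the eigenframe, and the formulas for $h(\nabla\dif\varphi(E_i,E_i),\dif\varphi(E_k))$ according to whether $i=k$ or $i\neq k$. Using also $\varphi^{\ast}h(X,E_k)=\lambda_k^2 g(X^{\HH},E_k)$ and the fact that $\mu^{\VV}$ is horizontal, the inner product with $\dif\varphi(E_1)$ yields, after grouping, a derivative contribution
\begin{equation*}
\tfrac{1}{2}(\lambda_2^2+\lambda_3^2)E_1(\lambda_1^2)+\tfrac{1}{2}(\lambda_1^2-\lambda_3^2)E_1(\lambda_2^2)+\tfrac{1}{2}(\lambda_1^2-\lambda_2^2)E_1(\lambda_3^2)=\tfrac{1}{2}E_1(\lambda_1^2\lambda_2^2+\lambda_1^2\lambda_3^2-\lambda_2^2\lambda_3^2),
\end{equation*}
together with Christoffel contributions $\lambda_3^2(\lambda_2^2-\lambda_1^2)\Gamma_{22}^{1}+\lambda_2^2(\lambda_3^2-\lambda_1^2)\Gamma_{33}^{1}$ (the $\lambda_1^2$ pieces of $(\lambda_1^2+\lambda_i^2)$ cancelling exactly against the corresponding term from $\dif\varphi([\di\varphi^{\ast}h]^{\sharp})$), and a mean-curvature contribution $-(m-3)\lambda_1^2(\lambda_2^2+\lambda_3^2)g(\mu^{\VV},E_1)$ (the $\lambda_1^2$ coming from $\Lambda$ being absorbed in the cancellation with the corresponding term of $\mathrm{div}\,\varphi^{\ast}h$). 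This is precisely the first line of \eqref{sig3}; cyclic permutation of the indices $1,2,3$ yields the remaining two.

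The only real obstacle is bookkeeping: one must track the $\Gamma_{ii}^{k}$ terms coming from $\nabla\dif\varphi(E_i,E_i)$ against those of $\di\varphi^{\ast}h$, and similarly for $g(\mu^{\VV},E_k)$. Both cancellations are forced by the structure $\Lambda-\lambda_k^2$ versus $\lambda_k^2$ in the two groups of terms, and no new ingredient beyond the two lemmas of \cite{slub} is needed.
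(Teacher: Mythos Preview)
Your proposal is correct and follows exactly the approach the paper intends: the paper states this corollary with the remark ``Analogously to the Corollary~\ref{2di}, we can establish'' and gives no further proof, so your expansion of $\tau_{\sigma_2}(\varphi)$ in the eigenframe together with the same two lemmas from \cite{slub} used in Corollary~\ref{2di} is precisely the omitted computation. The bookkeeping you outline (the $\Lambda-\lambda_k^2$ versus $\lambda_k^2$ cancellations in the $\Gamma_{ii}^{k}$ and $g(\mu^{\VV},E_k)$ terms) checks out line by line.
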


In particular, if $\varphi$ is horizontally conformal, i.e. $\lambda_{1}^{2} = \lambda_{2}^{2} = \lambda_{3}^{2}$,
notice that \eqref{sig3} is equivalent to the equation of 4-harmonicity \eqref{4morph}.

As suggested by the elementary example of H\'enon map, the most natural choice in finding $\sigma_2$-critical maps is given by area preserving (up to a constant rescaling) maps. Supposing $m=3$ and  $\lambda_{1}^{2} = \lambda_{2}^{2} \cdot \lambda_{3}^{2}=k^2$, after simplification of a $k^2$ factor in every equation, \eqref{sig3} becomes (with $\lambda_{2}^{2}=\lambda^{2}$):

\begin{equation}\label{contactsig3}
\left\{
\begin{array}{ccc}
\frac{1}{2}E_1 (\lambda^{2} + \frac{k^2}{\lambda^{2}}) +
\frac{1}{\lambda^{2}}(\lambda^{2} - k^{2})\Gamma_{22}^{1} + \lambda^{2}(\frac{1}{\lambda^{2}} -1)\Gamma_{33}^{1} &=&0\\[3mm]
\frac{1}{2}E_2 (\lambda^{2} - \frac{k^2}{\lambda^{2}}) +
(\frac{k^2}{\lambda^{2}} -\lambda^{2})\Gamma_{33}^{2} +
\frac{1}{\lambda^{2}}(k^{2}-\lambda^{2})\Gamma_{11}^{2}&=&0\\[3mm]
\frac{1}{2}E_3 (\frac{k^2}{\lambda^{2}} - \lambda^{2}) +
\lambda^{2}(1-\frac{1}{\lambda^{2}})\Gamma_{11}^{3} + (\lambda^{2}-\frac{k^2}{\lambda^{2}})\Gamma_{22}^{3} &=&0
\end{array}
\right.
\end{equation}

If moreover the eigenvector corresponding to $\lambda_{1}^{2}$ is a complete Killing vector field on $M$, then $\Gamma_{11}^{i} =0$ (its integral curves provide a minimal foliation on $M$) and $\Gamma_{ii}^{1} =0$ (the complementary distribution is totally geodesic). The above system simplifies once more and, after comparing with the analogous system of equations for harmonicity of $\varphi$ (\cite[(7)]{slub}):
\begin{equation}\label{harm-gen}
E_{k}[\lambda_{k}^{2}- e(\varphi)]+\sum_{i=1}^{3}(\lambda_{i}^{2}- \lambda_{k}^{2}) \Gamma_{ii}^{k} -(m-3) \lambda_{k}^{2} g(\mu^{\VV}, E_{k})=0, \quad k=1,2,3
\end{equation}
we can conclude as follows:
\begin{te}\label{contactomo}
Let $\varphi$ be a smooth mapping between $3$-dimensional Riemannian manifolds. Suppose that the eigenvalues of its Cauchy-Green tensor satisfy $\lambda_{1}^{2} = \lambda_{2}^{2} \cdot \lambda_{3}^{2}=k^2$,
$k\in \RR$ and that the eigenvector corresponding to $\lambda_{1}^{2}$ is a (globally defined) Killing vector field. Then the following statements are equivalent:

$(a)$ $\varphi$ is harmonic 

$(b)$ $\varphi$ is $\sigma_2$--critical

$(c)$ $\frac{1}{2} \gr(\lambda_{2}^{2} - \lambda_{3}^{2})=(\lambda_{2}^{2} - \lambda_{3}^{2}) \left(\nabla_{E_2}E_2 + \nabla_{E_3}E_3\right)$.

In particular, if moreover $\lambda_{2}^{2} = \lambda_{3}^{2}=k$, then $\varphi$ is both harmonic and $\sigma_2$--critical, so critical configuration for the full Skyrme model with arbitrary coupling constant.
\end{te}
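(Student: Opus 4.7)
The plan is to reduce both the harmonicity system \eqref{harm-gen} and the $\sigma_2$-critical system \eqref{contactsig3} to the \emph{same} set of three scalar equations under the hypotheses, and then match those equations against condition $(c)$ by projecting it onto the orthonormal eigenframe $\{E_1, E_2, E_3\}$. The one subtle step is that the ``$E_1$-component'' coming out of the reduced system involves $\lambda_2^2 + \lambda_3^2$, whereas the one coming out of $(c)$ involves $\lambda_2^2 - \lambda_3^2$; the product constraint $\lambda_2^2 \lambda_3^2 = k^2$ is exactly what reconciles them.

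Since $E_1$ is a globally defined unit Killing vector field, the identity $g(\nabla_X E_1, Y) + g(\nabla_Y E_1, X) = 0$ together with $g(\nabla_X E_1, E_1) = \tfrac{1}{2} X(|E_1|^2) = 0$ produces $\Gamma_{ii}^1 = 0$ and $\Gamma_{11}^i = 0$ for $i \in \{2, 3\}$, exactly the simplifications anticipated in the paragraph preceding the theorem. Substituting these vanishings together with $\lambda_1^2 = k^2$ constant, $\lambda_2^2 \lambda_3^2 = k^2$ constant, and $e(\varphi) = \tfrac{1}{2}(k^2 + \lambda_2^2 + \lambda_3^2)$ into the harmonicity equations \eqref{harm-gen} (with $m = 3$, so the $\mu^{\VV}$-term drops) reduces them to
\begin{equation*}
E_1(\lambda_2^2 + \lambda_3^2) = 0, \qquad \tfrac{1}{2} E_2(\lambda_2^2 - \lambda_3^2) = (\lambda_2^2 - \lambda_3^2)\,\Gamma_{33}^2, \qquad \tfrac{1}{2} E_3(\lambda_2^2 - \lambda_3^2) = (\lambda_2^2 - \lambda_3^2)\,\Gamma_{22}^3.
\end{equation*}
The same substitution in \eqref{contactsig3}, after factoring out a common $k^2$ from each equation, yields the identical three equations, establishing $(a)\Leftrightarrow(b)$.

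For the equivalence with $(c)$, I would decompose $\gr(\lambda_2^2 - \lambda_3^2) = \sum_i E_i(\lambda_2^2 - \lambda_3^2)\, E_i$ and project $(c)$ onto each $E_i$; using $g(\nabla_{E_i} E_i, E_i) = 0$ together with $\Gamma_{22}^1 = \Gamma_{33}^1 = 0$, the $E_2$- and $E_3$-components reproduce precisely the last two reduced equations, while the $E_1$-component becomes $E_1(\lambda_2^2 - \lambda_3^2) = 0$. The main obstacle is then to show that this $E_1$-condition is equivalent to $E_1(\lambda_2^2 + \lambda_3^2) = 0$ from the reduced system: differentiating $\lambda_2^2 \lambda_3^2 = k^2$ along $E_1$ gives $\lambda_3^2 E_1(\lambda_2^2) + \lambda_2^2 E_1(\lambda_3^2) = 0$, so at every point with $\lambda_2^2 \neq \lambda_3^2$ the pair $(E_1\lambda_2^2, E_1\lambda_3^2)$ satisfies a rank-two linear system, forcing both components to vanish under either condition; on the locus $\lambda_2^2 = \lambda_3^2$ the product constraint gives $\lambda_2^2 = \lambda_3^2 = |k|$, so both scalar equations are trivially satisfied. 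Finally, if $\lambda_2^2 = \lambda_3^2 = k$ everywhere, condition $(c)$ collapses to $0 = 0$ and by the equivalence just proved $\varphi$ is simultaneously harmonic and $\sigma_2$-critical; linearity of the first variation then gives $\sigma_{1,2}$-criticality for the full functional $\mathcal{E}_{\sigma_1} + \kappa \mathcal{E}_{\sigma_2}$ and every $\kappa \geq 0$.
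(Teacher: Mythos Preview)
Your proof is correct and follows precisely the route sketched in the paper: use the unit Killing hypothesis to kill $\Gamma_{11}^{i}$ and $\Gamma_{ii}^{1}$, reduce both the harmonicity system \eqref{harm-gen} and the $\sigma_2$-system \eqref{contactsig3} to the same three scalar equations, and match against $(c)$ componentwise. You in fact supply a detail the paper leaves entirely implicit, namely the reconciliation of the $E_1$-components (the reduced system gives $E_1(\lambda_2^2+\lambda_3^2)=0$ while $(c)$ gives $E_1(\lambda_2^2-\lambda_3^2)=0$) via the product constraint $\lambda_2^2\lambda_3^2=k^2$.
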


As we mentioned in the previous section a particular class of maps that satisfy the hypothesis $\lambda_{1}^{2} = \lambda_{2}^{2} \cdot \lambda_{3}^{2}=k^2$ is provided by contactomorphisms. So, in particular, we have:
\begin{co}
A contactomorphism $\varphi: (M^3, \eta, g) \to (N^3, \eta^{\prime}, g^{\prime})$ from a $K$-contact 3-manifold to a contact metric 3-manifold such that $\varphi^* \eta^{\prime} = k \cdot \eta$, $k\in \RR$ is $\sigma_{1,2}$--critical if and only if the equation $(c)$ is verified.

Moreover, if $M$ and $N$ are closed manifolds, then 
$\mathrm{deg}\varphi = k^2\frac{\mathrm{Vol}(M, g)}{\mathrm{Vol}(N, g^{\prime})}$.
\end{co}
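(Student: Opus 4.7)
The plan is to reduce the $\sigma_{1,2}$-critical equivalence to Theorem \ref{contactomo} by verifying that its hypotheses are automatically satisfied in our setting, and then to compute the degree by pulling back the contact volume form. By item $(3)$ of Subsection 2.3, a contactomorphism with $\varphi^{*}\eta' = k\eta$ has Cauchy-Green eigenvalues satisfying $\lambda_1^2 = \lambda_2^2\lambda_3^2 = k^2$, with the eigenvector corresponding to $\lambda_1^2$ being the Reeb field $\xi$ of $M$. The $K$-contact hypothesis on $(M,\eta,g)$ is, by definition, exactly the assertion that $\xi$ is a Killing vector field on $M$. Theorem \ref{contactomo} therefore applies directly and yields the equivalence of harmonicity, $\sigma_2$-criticality, and equation $(c)$.

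To extend this to $\sigma_{1,2}$-criticality, note that the Euler-Lagrange operator of $\mathcal{E}_{\sigma_{1,2}} = \mathcal{E}_{\sigma_1} + \kappa\,\mathcal{E}_{\sigma_2}$ is $\tau(\varphi) + \kappa\,\tau_{\sigma_2}(\varphi)$. If $(c)$ holds, both tension fields vanish by the theorem and $\varphi$ is $\sigma_{1,2}$-critical. For the converse, one must look inside the proof of Theorem \ref{contactomo}: under the Killing/Reeb condition and $\lambda_1^2 = \lambda_2^2\lambda_3^2 = k^2$, the two systems \eqref{harm-gen} and \eqref{contactsig3} simplify to scalar multiples of a common vector field whose vanishing is equivalent to $(c)$, so their linear combination vanishes exactly when $(c)$ does. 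Ruling out accidental cancellations between $\tau(\varphi)$ and $\kappa\,\tau_{\sigma_2}(\varphi)$ in the restricted class of maps considered is the step I expect to be the main difficulty.

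For the degree formula I would use the standard fact that on a contact metric $3$-manifold the Riemannian volume form is a universal nonzero multiple of $\eta \wedge d\eta$, with the same normalization on both $(M,\eta,g)$ and $(N,\eta',g')$. Since $k \in \RR$ is constant, $\varphi^{*}d\eta' = k\,d\eta$, whence $\varphi^{*}(\eta' \wedge d\eta') = k^{2}\,\eta \wedge d\eta$ and therefore $\varphi^{*}\nu_{g'} = k^{2}\,\nu_g$. Integrating over $M$ and invoking the defining identity $\int_M \varphi^{*}\nu_{g'} = (\mathrm{deg}\,\varphi)\cdot \mathrm{Vol}(N, g')$ then yields the announced formula.
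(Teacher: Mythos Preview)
Your approach is exactly the paper's: the corollary is stated without proof, as an immediate specialisation of Theorem~\ref{contactomo} via the eigenvalue characterisation of contactomorphisms in Subsection~2.3 and the definition of $K$-contact. The degree computation you give (pulling back the contact volume $\tfrac12\,\eta'\wedge\dif\eta'$) is correct and is likewise left implicit in the paper.

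The one point you flag as a difficulty---ruling out cancellation between $\tau(\varphi)$ and $\kappa\,\tau_{\sigma_2}(\varphi)$---dissolves once you track the constants in the paragraph preceding Theorem~\ref{contactomo}. There the system \eqref{contactsig3} is obtained from \eqref{sig3} \emph{after dividing through by $k^{2}$}; undoing that, and using $\Gamma_{11}^{j}=\Gamma_{jj}^{1}=0$, one finds that for each $j$ the component $h(\tau_{\sigma_2}(\varphi),\dif\varphi(E_j))$ equals $k^{2}$ times the corresponding component $h(\tau(\varphi),\dif\varphi(E_j))$ coming from \eqref{harm-gen}. Hence
\[
h\bigl(\tau(\varphi)+\kappa\,\tau_{\sigma_2}(\varphi),\,\dif\varphi(E_j)\bigr)=(1+\kappa k^{2})\,h\bigl(\tau(\varphi),\,\dif\varphi(E_j)\bigr),
\]
and since $1+\kappa k^{2}>0$ for every coupling constant $\kappa\geq 0$, the $\sigma_{1,2}$-criticality is equivalent to harmonicity, hence to $(c)$. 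So your outline is complete once this proportionality constant is made explicit.
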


See Example \ref{tact}$(i)$ below for an illustration of the above phenomenon on the Heisenberg group.
\subsection{The effect of (bi)conformal changes of metric}
Let $\varphi: (M^m,g) \to (N^n,h)$ be an almost submersion and denote as usual $\Ker \dif \varphi = \VV$, $\HH=\VV^{\perp}$. 
Recall \cite{ud, Pantilie} that under a \textit{biconformal change} of metric,
$\ov{g}=\sigma^{-2}g^{\HH}+\rho^{-2}g^{\VV}$, the tension field of $\varphi$ becomes
\begin{equation}
\ov \tau(\varphi) = \sigma^2\{\tau(\varphi) + \dif \varphi\left(\gr \ln (\sigma^{2-n}\rho^{n-m})\right)\},
\end{equation}
where $\sigma, \rho$ are nowhere vanishing functions on $M$. 

Let $\gr_{\varphi^* h}f = \sum_k \lambda_{k}^{2}E_k(f)E_k$ denote the \textit{weighted gradient} (with respect to the Cauchy-Green tensor of $\varphi$) of a function $f$ defined on $M$, where $E_k$ are eigenvectors of $\varphi^* h$ corresponding to the eigenvalues $\lambda_{k}^{2}$.

Using a standard technique, similar to the harmonic case we can state the following:
\begin{te}\label{bic}
Under a biconformal change of metric,
the $\sigma_2$-tension field of $\varphi$ becomes
\begin{equation}\label{biconf}
\ov \tau_{\sigma_2}(\varphi) = \sigma^4\{\tau_{\sigma_2}(\varphi) + 2e(\varphi)\dif \varphi\left(\gr \ln (\sigma^{4-n}\rho^{n-m})\right) - \dif \varphi\left(\gr_{\varphi^* h} \ln (\sigma^{4-n}\rho^{n-m})\right)\}
\end{equation}
In particular, taking $\sigma =\rho$, we have
the corresponding formula for conformal changes of metric:
\begin{equation*}
\ov \tau_{\sigma_2}(\varphi) = \sigma^4\{\tau_{\sigma_2}(\varphi) + 2e(\varphi)\dif \varphi\left(\gr \ln (\sigma^{4-m})\right) - \dif \varphi\left(\gr_{\varphi^* h} \ln (\sigma^{4-m})\right)\}
\end{equation*}

\end{te}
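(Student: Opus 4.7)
The plan is to rewrite $\ov{\mathcal E}_{\sigma_2}(\varphi)$ as a \emph{weighted} version of $\mathcal E_{\sigma_2}(\varphi)$ on $(M,g)$, mimic the derivation of Proposition~3.1 while carrying the weight through each integration by parts, and then transfer the resulting Euler--Lagrange operator back to $\ov g$.

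First, since $\varphi$ has rank $n$, the non-zero eigenvalues of $\varphi^{*}h$ all lie on $\HH$, on which $\ov g = \sigma^{-2}g$; so with respect to $\ov g$ they rescale as $\ov{\lambda}_i^{\,2} = \sigma^2\lambda_i^2$, giving $\ov{\sigma}_2(\varphi) = \sigma^4\sigma_2(\varphi)$. Combined with the biconformal volume rescaling $\nu_{\ov g}=\sigma^{-n}\rho^{-(m-n)}\nu_g$, this produces
\[
\ov{\mathcal E}_{\sigma_2}(\varphi) \;=\; \frac{1}{2}\int_M w\,\sigma_2(\varphi)\,\nu_g, \qquad w:=\sigma^{4-n}\rho^{n-m}.
\]
It thus suffices to derive the Euler--Lagrange operator of the functional $\varphi \mapsto \tfrac{1}{2}\int_M w\,\sigma_2(\varphi)\,\nu_g$ with respect to $g$ and translate.

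The weighted first variation follows the derivation of Proposition~3.1 verbatim up to the two divergence-theorem steps. The identity $\int_M 2e(\varphi)\,\di X_v\,\nu_g = -\int_M h(v,\dif\varphi(\gr 2e(\varphi)))\,\nu_g$ becomes $-\int_M h(v,\dif\varphi(\gr(2we(\varphi))))\,\nu_g$, producing the extra piece $-2e(\varphi)\,h(v,\dif\varphi(\gr w))$; and $\int_M \di\widetilde X_v\,\nu_g=0$ becomes $\int_M g(\gr w,\widetilde X_v)\,\nu_g$. Here I would use that $\widetilde X_v=\Cc_\varphi X_v$ (a frame-independent consequence of the eigenframe definition $\widetilde X_v=\sum_i\lambda_i^2 g(X_v,e_i)e_i$) together with the symmetry of $\Cc_\varphi$ and the identity $\Cc_\varphi(\gr w) = \gr_{\varphi^*h}w$ to rewrite
\[
g(\gr w,\widetilde X_v)=g(\Cc_\varphi\gr w,X_v)=g(\gr_{\varphi^*h}w,X_v)=h(v,\dif\varphi(\gr_{\varphi^*h}w)).
\]
Collecting all contributions, the Euler--Lagrange operator of the weighted functional equals
\[
w\,\tau_{\sigma_2}(\varphi) + 2e(\varphi)\,\dif\varphi(\gr w) - \dif\varphi(\gr_{\varphi^*h}w).
\]

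To finish, I would equate the resulting first variation with the intrinsic expression $-\int_M h(v,\ov\tau_{\sigma_2}(\varphi))\,\nu_{\ov g}=-\int_M h(v,\ov\tau_{\sigma_2}(\varphi))\sigma^{-n}\rho^{-(m-n)}\,\nu_g$ and multiply through by $\sigma^n\rho^{m-n}=\sigma^4/w$. Using $\gr w=w\,\gr\ln w$ and $\gr_{\varphi^*h}w=w\,\gr_{\varphi^*h}\ln w$ then recasts the correction terms in the logarithmic form of \eqref{biconf}; the conformal specialisation $\sigma=\rho$ is immediate. The most delicate point is the second integration by parts: one must recognise the interior term $g(\gr w,\widetilde X_v)$ as $h(v,\dif\varphi(\gr_{\varphi^*h}w))$ rather than as an ordinary gradient contribution. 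It is precisely this identification that forces the weighted gradient into the final formula and accounts for the asymmetry between the two correction terms on the right-hand side of \eqref{biconf}.
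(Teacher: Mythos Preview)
Your argument is correct and is precisely the ``standard technique, similar to the harmonic case'' that the paper invokes without details: rewrite the $\sigma_2$--energy in $\ov g$ as the $g$--energy weighted by $w=\sigma^{4-n}\rho^{n-m}$, rerun the first variation of Proposition~3.1 carrying $w$ through the two integrations by parts, and then pass back to $\ov g$ via $\nu_{\ov g}=\sigma^{-n}\rho^{-(m-n)}\nu_g$. The key identification $g(\gr w,\widetilde X_v)=h(v,\dif\varphi(\gr_{\varphi^*h}w))$ via $\widetilde X_v=\Cc_\varphi X_v$ is exactly what produces the weighted-gradient term, and your sign bookkeeping and the final passage to logarithms are both correct.
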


\begin{co}
\begin{enumerate}
\item[(i)] When $m \neq n$, $\varphi$ is $\sigma_2$-critical with respect to $g$ if and only if is $\sigma_2$-critical with respect to 
$\ov g=\sigma^{-2}g^{\HH}+\sigma^{\frac{2(n-4)}{m-n}}g^{\VV}$.

\item[(ii)] if $m=3$, $n=2$, then $\sigma_2$-Euler-Lagrange equations are invariant under the change of metric $\ov{g}=\sigma^{-2}g^{\HH}+\sigma^{-4}g^{\VV}$, i.e. for and $\sigma^2=\rho$;


\item[(iii)] if $m=4$, then $\sigma_2$-Euler-Lagrange equations are invariant under conformal changes of metric, $\ov{g}=\sigma^{-2}g$, i.e. for $\sigma=\rho$;

\item[(iv)] if $\varphi$ is a horizontally conformal map with dilation $\lambda$, then \eqref{biconf} simplifies to:
\begin{equation*}
\ov \tau_{\sigma_2}(\varphi) = \sigma^4\{\tau_{\sigma_2}(\varphi) + (n-1)\lambda^2\dif \varphi\left(\gr \ln (\sigma^{4-n}\rho^{n-m})\right) \}.
\end{equation*}
\end{enumerate}
\end{co}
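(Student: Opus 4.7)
The plan is to exploit the variational characterization of $\tau_{\sigma_2}$: rather than transforming each term in the definition (the tension, the Hessian trace, the divergence of $\Cc_\varphi$) separately under the biconformal change, which would be messy, I would rerun the first variation argument of Proposition 3.1 entirely in the $g$-metric but with a multiplicative weight, and then read off $\ov\tau_{\sigma_2}$ by comparison. The first ingredient is metric bookkeeping: because $\ov g=\sigma^{-2}g^\HH+\rho^{-2}g^\VV$ preserves the eigenspaces of $\varphi^{*}h$ and rescales the non-zero eigenvalues as $\ov\lambda_i^{2}=\sigma^{2}\lambda_i^{2}$, while $\nu_{\ov g}=\sigma^{-n}\rho^{-(m-n)}\nu_g$, one immediately gets
\[
\ov{\mathcal{E}}_{\sigma_2}(\varphi)=\tfrac{1}{2}\int_M F\,\sigma_2(\varphi)\,\nu_g,\qquad F:=\sigma^{4-n}\rho^{n-m}.
\]

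Next I would compute $\tfrac{\dif}{\dif t}\big|_{0}\int_M F\,\sigma_2(\varphi_t)\,\nu_g$ by repeating the derivation leading to the first variation formula with the weight $F$ carried throughout. The pointwise expression for $\tfrac{\dif}{\dif t}\sigma_2(\varphi_t)$ and the identities in Remark 3.1 stay unchanged; only the two integration-by-parts steps at the end pick up derivatives of $F$. The first produces $\int_M F\cdot 2e(\varphi)\,\di X_v\,\nu_g=-\int_M h\bigl(v,\,2e(\varphi)\dif\varphi(\gr F)+2F\dif\varphi(\gr e(\varphi))\bigr)\nu_g$, and the second, after applying identity \eqref{tric}, gives $\int_M F\,\di\widetilde X_v\,\nu_g=-\int_M g(\gr F,\widetilde X_v)\,\nu_g=-\int_M h(v,\dif\varphi(\gr_{\varphi^*h}F))\nu_g$; the last equality is the direct expansion of $\widetilde X_v=\sum_i\lambda_i^{2}g(X_v,e_i)e_i$ using the defining relation of $X_v$ and of the $\varphi^{*}h$-weighted gradient. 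Collecting everything yields
\[
\tfrac{\dif}{\dif t}\big|_{0}\ov{\mathcal{E}}_{\sigma_2}(\varphi_t)=-\int_M h\!\left(v,\;F\,\tau_{\sigma_2}(\varphi)+2e(\varphi)\dif\varphi(\gr F)-\dif\varphi(\gr_{\varphi^*h}F)\right)\nu_g.
\]

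The same variation must equal $-\int_M h(v,\ov\tau_{\sigma_2}(\varphi))\nu_{\ov g}=-\int_M h(v,\ov\tau_{\sigma_2}(\varphi))\sigma^{-n}\rho^{n-m}\nu_g$; equating integrands for arbitrary $v$ and multiplying by $\sigma^n\rho^{m-n}$ gives
\[
\ov\tau_{\sigma_2}(\varphi)=\sigma^4\tau_{\sigma_2}(\varphi)+\sigma^n\rho^{m-n}\bigl(2e(\varphi)\dif\varphi(\gr F)-\dif\varphi(\gr_{\varphi^*h}F)\bigr).
\]
Using $\gr F=F\,\gr\ln F$ and $\gr_{\varphi^*h}F=F\,\gr_{\varphi^*h}\ln F$ together with $\sigma^n\rho^{m-n}F=\sigma^4$ produces the logarithmic form stated in the theorem. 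The four items of the corollary then drop out by inspection: in (i)--(iii) the exponents are tuned so that $F$ is identically constant and both correction terms vanish; in (iv) horizontal conformality forces $\gr_{\varphi^*h}f=\lambda^2\gr^\HH f$ (which coincides with $\lambda^2\gr f$ modulo $\Ker\dif\varphi$) and $2e(\varphi)=n\lambda^2$, so the two correction terms combine into $(n-1)\lambda^2\dif\varphi(\gr\ln F)$.

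The main obstacle is bookkeeping: one has to keep $F$ attached to the correct pieces through the expansion $\alpha_v(Y,Z)=g(\nabla_Y X_v,Z)-h(v,\nabla\dif\varphi(Y,Z))$ and through both integration-by-parts steps without accumulating sign or index errors. The crucial observation that makes the weighted gradient appear naturally in the final formula is the identity $g(\gr F,\widetilde X_v)=h(v,\dif\varphi(\gr_{\varphi^*h}F))$; without it, the correction arising from the $\widetilde X_v$-divergence would not visibly fit into the pulled-back structure on the right-hand side of the theorem's formula.
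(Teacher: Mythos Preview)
Your argument is correct and complete: the weighted first-variation computation reproduces formula \eqref{biconf} exactly, and items (i)--(iv) of the corollary follow by the specializations you indicate. The paper itself gives no explicit proof of either Theorem \ref{bic} or this corollary (the theorem is stated with the remark that it follows by ``a standard technique, similar to the harmonic case'', and the corollary is then immediate); your variational derivation is precisely one clean realization of that standard technique, so there is nothing to compare---you have supplied the details the paper omits.
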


The above formulae for the effect of changes of domain metric can be used to construct $\sigma_{2}$ or $\sigma_{1,2}$-critical maps with respect to a metric (bi)conformally related to a given "standard" one. The invariance results can be used to construct a $\sigma_{1,2}$-critical map with respect to a (bi)conformally related metric $\ov g$ starting with a $\sigma_{2}$-solution with respect to $g$.

\section{Weak $\sigma_2$-Stability}

Let $\{\varphi_{t,s}\}$ a (smooth) two-parameter variation of $\varphi$ with variation vector fields 
$v,w \in \Gamma(\varphi^{-1}TN)$, i.e.
$$
v(x) = \displaystyle{\frac{\partial \varphi_{t,s}}{\partial t}(x) \Bigl\lvert _{(t,s)=(0,0)}}, \qquad
w(x) = \displaystyle{\frac{\partial \varphi_{t,s}}{\partial s}(x) \Bigl\lvert _{(t,s)=(0,0)}}, \qquad \forall x \in M.
$$ 

We ask when the following bilinear function is positive semi-definite for a $\sigma_2$-critical mapping $\varphi$, which will be consequently called \textit{(weakly) stable}:
\begin{equation*}
\mathrm{Hess}^{\sigma_2}_{\varphi}(v,w) = \displaystyle{\frac{\partial^{2}}{\partial t \partial s}\mathcal{E}_{\sigma_2}(\varphi_{t,s}) \Bigl\lvert _{(t,s)=(0,0)}}
\end{equation*}

Let us now recall some standard notations: $\langle \cdot, \cdot \rangle$ is the inner product on $\otimes ^p T^* M$ induced from $g$ in the standard way (and $\abs{\cdot}$ is the corresponding norm); \ $\mathrm{Ric}^{\varphi}$ is the fiberwise linear bundle map on $\varphi^{-1}TN$ defined by $\mathrm{Ric}^{\varphi} v = \tr R^N(v, \dif \varphi)\dif \varphi$; \ $(\nabla^{\varphi})^2$ is the second order operator on $\Gamma(\varphi^{-1}TN)$ defined as $[(\nabla^{\varphi})^2 v](X,Y) =\nabla^{\varphi}_X \nabla^{\varphi}_Y v-\nabla^{\varphi}_{\nabla_X Y} v$; \ $\Delta^{\varphi}=\tr (\nabla^{\varphi})^2$ is the rough Laplacian along $\varphi$ that has the property: $\int _M h(\Delta^{\varphi} v, v) \nu_g = 
-\int _M \langle \nabla^{\varphi} v, \nabla^{\varphi} v \rangle \nu_g$ on compactly supported sections.

\begin{pr}[The second variation formula]
\begin{equation}\label{sec}
\begin{split}
\frac{\partial^{2}}{\partial t \partial s} \mathcal{E}_{\sigma_2}(\varphi_{t,s}) \Bigl\lvert _{(0,0)}
=& 2\int_M \left\{\di^{\varphi}v \cdot \di^{\varphi}w 
+e(\varphi)\left[ \langle \nabla^{\varphi} v, \nabla^{\varphi} w \rangle - h(\mathrm{Ric}^{\varphi}v, w) \right]\right\}\nu _g\\
&+\int_M  \left\{2\langle \alpha_v, h \left( w, \nabla \dif \varphi \right) \rangle +
\langle h \left((\nabla^{\varphi})^{2}v + R^N(v, \dif \varphi)\dif \varphi, \ w \right), \varphi^*h \rangle \right\} \nu _g\\
&+\int_M \left\{
X_w(\mathrm{div}X_v)+
h(\tr (\nabla^{\varphi})^{2}v + \mathrm{Ric}^{\varphi}v, \dif \varphi(X_w))\right\} \nu _g\\
&+\int_M \left\{-h\left(\nabla^{\varphi}_{X_w}\tau(\varphi), v\right)+
h\left( w, \nabla^{\varphi}_{\mathrm{div}\Cc_{\varphi}}v \right) \right\} \nu _g.
\end{split}
\end{equation}
The term $2\langle \alpha_v, h \left( w, \nabla \dif \varphi \right) \rangle$ can be also written as 
$\langle \mathcal{L}_{X_v}g, h \left(w, \nabla \dif \varphi \right) \rangle - 2\langle h \left( v, \nabla \dif \varphi \right), h \left( w, \nabla \dif \varphi \right) \rangle$.
\end{pr}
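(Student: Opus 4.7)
The strategy is to differentiate the first variation once more and then integrate by parts to eliminate mixed-partial-derivative terms. A convenient starting point is the unintegrated identity
$$\frac{\dif}{\dif t}\mathcal{E}_{\sigma_2}(\varphi_{t,s}) = \int_M\bigl[2e(\varphi_{t,s})\,\di^{\varphi_{t,s}}V_{t,s} - \langle \varphi_{t,s}^{*}h,\alpha_{V_{t,s}}\rangle\bigr]\nu_g,$$
that can be read off from the computation preceding Definition~3.1 (equivalently, $\delta\mathcal{E}_{\sigma_2}(V)=\int_M\langle\chi_{1}(\varphi),\alpha_V\rangle\nu_g$ with $\chi_1(\varphi)=2e(\varphi)g-\varphi^{*}h$). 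Here $F(t,s,x):=\varphi_{t,s}(x)$, $V:=F_*\partial_t$, $W:=F_*\partial_s$, and $v,w$ denote their values at $(0,0)$. Three basic identities will be repeatedly applied: $\partial_s|_0 e(\varphi_s)=\di^{\varphi}w$; $\partial_s|_0 \dif\varphi_{0,s}(Y)=\nabla^{\varphi}_Y w$, so that $\partial_s|_0\varphi_s^{*}h=\alpha_w+\alpha_w^{\mathsf T}$; and the curvature commutator
$$\partial_s|_0\nabla^{\varphi_s}_Y V_s = \nabla^{\varphi}_Y\beta + R^N(w,\dif\varphi(Y))\,v,\qquad \beta:=\nabla^F_{\partial_s}V\big|_{(0,0)}=\nabla^F_{\partial_t}W\big|_{(0,0)},$$
obtained from $[\nabla^F_{\partial_s},\nabla^F_Y]=R^N(W,\dif F(Y))$, $[\partial_s,Y]=0$, and torsion-freeness of the Levi-Civita connection.

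Applying $\partial_s|_0$ and Leibniz: differentiating $2e(\varphi_s)\,\di^{\varphi_s}V_s$ produces $2\di^{\varphi}v\,\di^{\varphi}w$ together with the standard harmonic-map Hessian contribution $2e(\varphi)\bigl[\langle\nabla^{\varphi}v,\nabla^{\varphi}w\rangle - h(\mathrm{Ric}^{\varphi}v,w)\bigr]$ (the Ricci term arises from $\sum_i h(R^N(w,\dif\varphi(e_i))v,\dif\varphi(e_i))=-h(\mathrm{Ric}^{\varphi}v,w)$ by the pair symmetry of $R^N$), plus a residual $2e(\varphi)\,\di^{\varphi}\beta$. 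Differentiating $\langle\varphi^{*}h,\alpha_V\rangle$, using the three identities above together with the product-rule contribution coming from $\partial_s|_0\varphi^{*}h=\alpha_w+\alpha_w^{\mathsf T}$, yields the second-line terms $2\langle\alpha_v,h(w,\nabla\dif\varphi)\rangle$ and $\langle h((\nabla^{\varphi})^{2}v+R^N(v,\dif\varphi)\dif\varphi,w),\varphi^{*}h\rangle$, modulo a further $\beta$-remainder of the form $\sum_i\lambda_i^2 h(\nabla^{\varphi}_{e_i}\beta,\dif\varphi(e_i))$. Altogether this accounts for the first two lines of the stated identity up to $\beta$-terms.

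The decisive step is to dispose of $\beta$ through integration by parts. Each $\beta$-contribution has the shape $\int_M\langle T,\nabla^{\varphi}\beta\rangle\nu_g$ or $\int_M h(\beta,U)\nu_g$ for tensors $T,U$ built from $\varphi$; moving the derivative off $\beta$ and then using Remark~3.1 to trade $\beta=\nabla^F_{\partial_t}W|_{(0,0)}$ for expressions involving the metric-dual vector field $X_w$, together with the stress-energy identity \eqref{stres} (which rewrites $\di\varphi^{*}h$ in terms of $\dif e(\varphi)$ and $h(\tau(\varphi),\dif\varphi)$), produces exactly the four surviving terms on lines 3 and 4: $X_w(\di X_v)$, $h(\tr(\nabla^{\varphi})^{2}v+\mathrm{Ric}^{\varphi}v,\dif\varphi(X_w))$, $-h(\nabla^{\varphi}_{X_w}\tau(\varphi),v)$, and $h(w,\nabla^{\varphi}_{\di\Cc_\varphi}v)$. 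The alternative form of $2\langle\alpha_v,h(w,\nabla\dif\varphi)\rangle$ is then immediate from Remark~3.1(ii), since $g(\nabla_YX_v,Z)+g(\nabla_ZX_v,Y)=(\mathcal{L}_{X_v}g)(Y,Z)$. The principal obstacle is the meticulous bookkeeping of the $\beta$-contributions, which appear in several distinct places and must all be collected together so that $\beta$ disappears from the final expression; crucially, no step relies on $\varphi$ being $\sigma_2$-critical, which would have killed $\tau_{\sigma_2}$ and simplified much of the argument.
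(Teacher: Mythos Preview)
Your overall strategy---differentiate the \emph{unintegrated} first variation $\int_M[2e(\varphi)\di^{\varphi}V-\langle\varphi^{*}h,\alpha_V\rangle]\nu_g$ in $s$---is a legitimate alternative to the paper's route, which instead differentiates the \emph{integrated} first variation $-\int_M h(W,\tau_{\sigma_2}(\varphi_{t,s}))\nu_g$ and then computes $h(w,\nabla^{\Phi}_{\partial_t}\tau_{\sigma_2})$ piece by piece (the three summands of $\tau_{\sigma_2}$ giving, respectively, lines 1, 2, and 3--4). However, your bookkeeping for the origin of the individual lines is incorrect, and the claim about $\beta$ is where the argument breaks down.

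First, lines 3--4 do \emph{not} arise from the $\beta$-terms. In the paper they come from $h\bigl(w,\nabla^{\Phi}_{\partial_t}[\dif\varphi(\di\Cc_{\varphi})]\bigr)$: one rewrites $\di\Cc_{\varphi}$ via the stress--energy identity \eqref{stres}, differentiates, and the quantities $X_w(\di X_v)$, $h(\tr(\nabla^{\varphi})^{2}v+\mathrm{Ric}^{\varphi}v,\dif\varphi(X_w))$, $-h(\nabla^{\varphi}_{X_w}\tau(\varphi),v)$ and $h(w,\nabla^{\varphi}_{\di\Cc_{\varphi}}v)$ appear directly. In your approach these same terms would have to be extracted, after substantial integration by parts, from the non-$\beta$ part of $\partial_s\langle\varphi^{*}h,\alpha_V\rangle$ (which contains $\langle\varphi^{*}h,h(\nabla^{\varphi}v,\nabla^{\varphi}w)\rangle$-type contributions), not from $\beta$.

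Second, and more seriously, the assertion that one can ``trade $\beta=\nabla^{F}_{\partial_t}W|_{(0,0)}$ for expressions involving $X_w$'' via Remark~3.1 is unfounded: $\beta$ depends on the chosen two-parameter variation and is \emph{not} determined by $v$ and $w$ alone (e.g.\ for $\varphi_{t,s}=\exp_{\varphi}(tv+sw)$ one has $\beta=0$, while other extensions give arbitrary $\beta$). The $\beta$-contributions $\int_M[2e(\varphi)\di^{\varphi}\beta-\langle\varphi^{*}h,\alpha_{\beta}\rangle]\nu_g$ integrate by parts, by the very computation that produced the first variation, to $-\int_M h(\beta,\tau_{\sigma_2}(\varphi))\nu_g$; this vanishes precisely when $\varphi$ is $\sigma_2$-critical, which is the standing hypothesis of the section (``for a $\sigma_2$-critical mapping $\varphi$'') and is implicit in the paper's citation of \cite{take,ara} for the $\tau_4$-part. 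Your final claim that ``no step relies on $\varphi$ being $\sigma_2$-critical'' is therefore the opposite of the truth: eliminating $\beta$ is exactly where criticality enters.
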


\begin{proof}
We have:
\begin{equation*}
\frac{\partial^{2}}{\partial t \partial s} \mathcal{E}_{\sigma_2}(\varphi_{t,s})=
- \int _M \left\{ h\left( \nabla_{\partial / \partial t}^{\Phi} \frac{\partial \Phi}{\partial s}, \tau_{\sigma_2}(\varphi_{t,s}) \right) +  h\left( \frac{\partial \Phi}{\partial s}, 
\nabla_{\partial / \partial t}^{\Phi} \tau_{\sigma_2}(\varphi_{t,s}) \right) \right\} \nu_g,
\end{equation*}
where $\tau_{\sigma_2}(\varphi)= \tau_{4}(\varphi)-
\tr (\nabla \dif \varphi) \circ \Cc_{\varphi} - 
\dif \varphi(\di \Cc_{\varphi})$ is the Euler-Lagrange operator calculated in the previous section and $\tau_{4}(\cdot)$ is the 4-tension field, cf. \eqref{21}.

The first line in \eqref{sec} is derived from $\tau_{4}(\varphi_{t,s})$ term, cf. \cite{take} (for a detailed proof see \cite{ara}).

Let $\{e_i\}_{i=1,...,m}$ be a local orthonormal frame on $M$. 

The variation of the term $\tr (\nabla \dif \varphi) \circ \Cc_{\varphi}$ gives us:
\begin{equation*}
\begin{split}
&h\left(\frac{\partial \Phi}{\partial s}, 
\nabla_{\partial / \partial t}^{\Phi} \left[\sum_i \varphi_{t,s}^* h(e_i, e_i) \nabla \dif \varphi_{t,s} (e_i , e_i)
+2 \sum_{i<j} \varphi_{t,s}^* h(e_i, e_j) \nabla \dif \varphi_{t,s} (e_i , e_j)\right] \right) \Bigl\lvert _{(t,s)=(0,0)} \\
&= \sum_i 2\alpha_v(e_i, e_i) h(w , \nabla \dif \varphi(e_i,e_i))
+2 \sum_{i<j} [\alpha_v(e_i, e_j)+ \alpha_v(e_j, e_i)] h(w , \nabla \dif \varphi(e_i,e_j)) \\
& \ + \sum_{i}\varphi^* h(e_i, e_i) h\left(w , (\nabla^{\varphi})^{2}_{e_i, e_i}v + R^N(v, \dif \varphi(e_i))\dif \varphi(e_i) \right) \\
& \ + 2 \sum_{i<j}\varphi^* h(e_i, e_j) h\left(w , (\nabla^{\varphi})^{2}_{e_i, e_j}v + R^N(v, \dif \varphi(e_i))\dif \varphi(e_j) \right)\\
&= 2\langle \alpha_v, h \left( w, \nabla \dif \varphi \right) \rangle +
\langle h \left((\nabla^{\varphi})^{2}v + R^N(v, \dif \varphi)\dif \varphi, \ w \right), \varphi^*h \rangle.
\end{split}
\end{equation*}

The variation of the term $\dif \varphi(\di \Cc_{\varphi})$ gives us:
\begin{equation*}
\begin{split}
&h\left( \frac{\partial \Phi}{\partial s}, 
\nabla_{\partial / \partial t}^{\Phi} 
\left[(\mathrm{div}\varphi_{t,s}^{*}h)(e_j) \, \dif \varphi_{t,s}(e_j) \right] \right) \Bigl\lvert _{(t,s)=(0,0)} \\
&=h\left( \frac{\partial \Phi}{\partial s}, 
\nabla_{\partial / \partial t}^{\Phi} 
\left[e_j(e(\varphi_{t,s})) +
h(\tau(\varphi_{t,s}), \dif\varphi_{t,s}(e_j) \right]\, \dif \varphi_{t,s}(e_j) \right) \Bigl\lvert _{(t,s)=(0,0)} \\
&=e_j\left[\mathrm{div}X_v - h(\tau(\varphi),v)\right]
h(w, \dif \varphi(e_j)) +
h(w , \nabla^{\varphi}_{\mathrm{grad} e(\varphi)}v)\\
& \ + h(\tr(\nabla^{\varphi})^{2}v + \mathrm{Ric}^{\varphi}v, \dif \varphi(e_j))h(w, \dif \varphi(e_j))\\
& \ +h(\tau(\varphi),\nabla^{\varphi}_{e_j}v)
h(w, \dif \varphi(e_j))-
h(w , \nabla^{\varphi}_{[\mathrm{div}S({\varphi})]^{\sharp}}v)\\
&=X_w(\mathrm{div}X_v)+
h(\tr (\nabla^{\varphi})^{2}v + \mathrm{Ric}^{\varphi}v, \dif \varphi(X_w))-h\left(\nabla^{\varphi}_{X_w}\tau(\varphi), v\right)+
h\left( w, \nabla^{\varphi}_{[\mathrm{div}\varphi^*h]^{\sharp}}v \right),
\end{split}
\end{equation*}
where all repeated indices are summed and we have used again \eqref{stres}.
\end{proof}

\begin{re}
Another version of the second variation formula for $\sigma_2$--energy
can be obtained from the general formula derived in \cite[p. 37]{cri}, which has the advantage of revealing the associated $\sigma_p$--Jacobi operator. Nevertheless one of its terms still contains a derivative $\frac{d}{dt}$ so is difficult to use it directly. Here we shall work with \eqref{sec} which has explicit terms.
\end{re}

Let us notice that, according to Remark \ref{31}, we have
\begin{equation*}
\begin{split}
&\di ^{\varphi} v \di ^{\varphi} w =
[\di X_v - h(v, \tau(\varphi))][\di X_w - h(w, \tau(\varphi))]\\
&= 
(\di X_v)(\di X_w) + h(v, \tau(\varphi))h(w, \tau(\varphi))-
 h(w, \tau(\varphi))\di X_v - h(v, \tau(\varphi))\di X_w\\
&= 
\di X_v \di X_w + h(v, \tau(\varphi))h(w, \tau(\varphi))+
h(\nabla^{\varphi}_{X_v}w + \nabla^{\varphi}_{X_w}v, \tau(\varphi))\\
&+ h\left(\nabla^{\varphi}_{X_v}\tau(\varphi), w\right) + h\left(\nabla^{\varphi}_{X_w}\tau(\varphi), v\right)
+ \ \text{divergence terms}
\end{split}
\end{equation*}

Moreover, as the identity $X(\di Y) + \di X \di Y=\di ((\di Y) X)$ always holds, on a closed Riemannian manifold $(M,g)$ we have:
\begin{equation*}
\int_M \left[ X(\di Y) + \di X \di Y \right] \nu _g =0, \quad \forall X,Y \in \Gamma(TM).
\end{equation*}

Therefore, using the above observations, we can rewrite \eqref{sec} in a different form. As the simplifications that occur are not enlightening in the general case, we shall apply them only in particular situations, as we shall see below.

\medskip

In the rest of this section we restrict ourselves to the particularly important case of harmonic maps that satisfy also \eqref{ELha}, i.e.  the case of harmonic $\sigma_2$--critical maps. In particular, these maps are critical points for the \textit{full} energy \eqref{generg}. According to the above observations, in this case \eqref{sec} simplifies  to:

\begin{co}[$\sigma_2$--Hessian of harmonic $\sigma_2$--critical mappings]
\begin{equation}\label{secha}
\begin{split}
\mathrm{Hess}^{\sigma_2}_{\varphi}(v,v)
=& \int_M \left\{ 2e(\varphi)\left[ \abs{\nabla^{\varphi} v}^2 - h\left(\mathrm{Ric}^{\varphi}v, v\right) \right] + (\di X_v)^2 \right\} \nu _g\\
&+\int_M \left\{2\langle \alpha_v, h \left( v, \nabla \dif \varphi \right) \rangle +
\langle h \left((\nabla^{\varphi})^{2}v + R^N(v, \dif \varphi)\dif \varphi, \ v \right), \varphi^*h \rangle \right\} \nu _g\\
&+\int_M \left\{
h\left(\tr (\nabla^{\varphi})^{2}v + \mathrm{Ric}^{\varphi}v, \dif \varphi(X_v) \right) + 
h\left(v, \nabla^{\varphi}_{\gr e(\varphi)}v \right)\right\} 
\nu _g.
\end{split}
\end{equation}
\end{co}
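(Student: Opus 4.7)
The plan is to specialize the general second variation formula \eqref{sec} to the case $v=w$ and then exploit the two hypotheses $\tau(\varphi)=0$ and \eqref{ELha} to collapse several terms. First I will set $v=w$ in \eqref{sec}. Since $\tau(\varphi)=0$, Remark \ref{31}($iii$) gives $\di^{\varphi}v = \di X_v$, so the first line becomes $2\int_M \{(\di X_v)^2 + e(\varphi)[|\nabla^{\varphi}v|^2 - h(\mathrm{Ric}^{\varphi}v,v)]\}\nu_g$. The term $-h(\nabla^{\varphi}_{X_v}\tau(\varphi),v)$ in the fourth line vanishes identically by harmonicity.

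Next, I would use identity \eqref{stres}, which for a harmonic map gives $\di\varphi^*h = \dif e(\varphi)$, i.e. $\di\Cc_{\varphi} = \gr e(\varphi)$. This is exactly the substitution needed to transform the remaining term $h(w,\nabla^{\varphi}_{\di\Cc_\varphi}v)$ of the fourth line of \eqref{sec} (with $w=v$) into the term $h(v,\nabla^{\varphi}_{\gr e(\varphi)}v)$ that appears in the third line of the statement.

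The only nontrivial step is handling the factor in front of $(\di X_v)^2$: the first line of \eqref{sec} contributes $2\int_M (\di X_v)^2 \nu_g$, while the third line of \eqref{sec} contributes $\int_M X_v(\di X_v)\,\nu_g$. Here I would invoke the divergence identity recalled in the paper just before the corollary, namely $\int_M [X(\di Y) + \di X\,\di Y]\nu_g = 0$ on a closed manifold, applied to $X=Y=X_v$. This yields $\int_M X_v(\di X_v)\nu_g = -\int_M (\di X_v)^2\nu_g$, so the two contributions combine to the single term $\int_M (\di X_v)^2\nu_g$ appearing in the statement.

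All the remaining terms of \eqref{sec}, namely the second line in its entirety and the term $h(\tr(\nabla^{\varphi})^2 v + \mathrm{Ric}^{\varphi}v, \dif\varphi(X_v))$ of the third line, carry over unchanged after setting $w=v$. I do not anticipate a real obstacle here; the only point requiring care is the bookkeeping of the $\tau$-dependent terms in the expansion of $\di^{\varphi}v\,\di^{\varphi}w$ that the author recorded just above the corollary, which in the harmonic case collapse entirely to $\di X_v\,\di X_w$ modulo divergences, so that the expansion is not actually needed beyond the simple substitution $\di^{\varphi}v = \di X_v$.
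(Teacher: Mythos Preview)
Your proposal is correct and follows essentially the same route as the paper, which simply states that \eqref{sec} ``simplifies'' to \eqref{secha} ``according to the above observations'' (namely the expansion of $\di^{\varphi}v\,\di^{\varphi}w$ and the divergence identity $\int_M[X(\di Y)+\di X\,\di Y]\nu_g=0$). Your remark that the full expansion of $\di^{\varphi}v\,\di^{\varphi}w$ is unnecessary in the harmonic case---since Remark~\ref{31}($iii$) gives $\di^{\varphi}v=\di X_v$ directly---is a valid shortcut, and your identification of the three substitutions (harmonicity to kill the $\tau$-term, \eqref{stres} to convert $\di\Cc_\varphi$ into $\gr e(\varphi)$, and the divergence identity to reduce $2(\di X_v)^2+X_v(\di X_v)$ to $(\di X_v)^2$) matches exactly what the paper's sketch intends.
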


\medskip

As we saw in the Corollary 3.3, one of the simplest examples of harmonic $\sigma_2$-critical mappings is provided by horizontally homothetic (HH) submersions (i.e. semiconformal with vertical gradient of the dilation) that have minimal fibres. In the following, by submersion we mean surjective submersion and $\VV=\Ker \dif \varphi$, $\HH = \VV^{\perp}$.

\begin{co}[$\sigma_2$--Hessian of harmonic HH submersions] \label{coheshh}
\begin{equation}\label{heshh}
\begin{split}
\mathrm{Hess}^{\sigma_2}_{\varphi}(v,v)
=& \int_M \left\{(n-2) \lambda^2 \left[ \abs{\nabla^{\varphi} v}^2 - h(\mathrm{Ric}^{\varphi}v, v) \right] + (\mathrm{div}X_v)^2 
\right\} \nu _g \\
&+\int_M \left\{
-\lambda^2 h\left(\Delta_{\VV}^{\varphi}v, v \right)
+ \frac{n-4}{2}h\left(\nabla^{\varphi}_{\gr ^{\VV}\lambda^2}v, v \right) \right\} \nu _g \\
&+2\int_M h \left( v, \sum_{\gamma=1}^{m-n}\nabla \dif \varphi([e_{\gamma}, X_v]^{\HH}, e_{\gamma}) - 2\nabla \dif \varphi(\gr^{\VV} \ln \lambda, X_v) \right) \nu _g,
\end{split}
\end{equation}
where $\Delta_{\VV}^{\varphi}v=\tr \vert _{\VV} (\nabla^{\varphi})^2$ and $\{e_\gamma \}_{\gamma=1,..., m-n}$ is a local orthonormal frame of $\VV$.\\
If $\mathrm{dim}M=\mathrm{dim}N$, that is for homothetic local diffeomorphisms (and in particular for the identity map $Id: (M,\lambda^2 g) \to (M,g)$ with $\lambda \equiv$ constant), the above formula takes the simple form:
\begin{equation}\label{identhh}
\mathrm{Hess}^{\sigma_2}_{\varphi}(v,v)
= \int_M \left\{(n-2) \lambda^2 \left[ \abs{\nabla^{\varphi} v}^2 - h(\mathrm{Ric}^{\varphi}v, v) \right] + (\mathrm{div}X_v)^2 
\right\} \nu _g.
\end{equation}
\end{co}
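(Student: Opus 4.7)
The plan is to start from formula \eqref{secha} in Corollary 4.1 for the $\sigma_2$--Hessian of a harmonic $\sigma_2$--critical mapping and specialize each term to the HH/minimal--fibre setting. That the hypothesis of Corollary 4.1 holds is guaranteed by Corollaries 3.3$(ii)$ and 3.4$(ii)$: horizontal homotheticity together with $\mu^\VV = 0$ kills the tension field formula $\tau(\varphi) = -\dif\varphi((n-2)\gr^\HH\ln\lambda + (m-n)\mu^\VV)$, so $\varphi$ is harmonic, and then equation \eqref{4morph} is trivially satisfied.

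In a local orthonormal frame adapted to the splitting $TM = \HH \oplus \VV$, say $\{E_i\}_{i=1}^n \cup \{e_\gamma\}_{\gamma=1}^{m-n}$, I would first record the structural identities that drive the simplification: $\varphi^*h = \lambda^2 g^\HH$; $e(\varphi) = (n/2)\lambda^2$; $\dif\varphi(e_\gamma) = 0$, whence $\mathrm{Ric}^\varphi v$ and $\langle \cdot\,, \varphi^*h\rangle$--traces reduce to partial traces over $\HH$ and $\alpha_v(\cdot, e_\gamma)=0$; $\dif\varphi(X_v) = \lambda^2 v$ (the sharpening of Remark 3.1$(a)$ made possible by horizontal conformality); $\gr e(\varphi) = (n/2)\gr^\VV \lambda^2$ (since HH gives $\gr\lambda \in \VV$); and, crucially, the vanishing $\nabla\dif\varphi(X,Y) = 0$ for all $X,Y \in \HH$. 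This last identity follows from the HC formula $\nabla\dif\varphi(X,Y) = X(\ln\lambda)\dif\varphi(Y) + Y(\ln\lambda)\dif\varphi(X) - g(X,Y)\dif\varphi(\gr^\HH\ln\lambda)$, each of whose terms is killed by $\gr\ln\lambda \in \VV$.

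Feeding these into \eqref{secha}, the second--line term becomes $\lambda^2 h(\Delta^\varphi v - \Delta_\VV^\varphi v,\, v) + \lambda^2 h(\mathrm{Ric}^\varphi v,\, v)$, the third--line term becomes $\lambda^2[h(\Delta^\varphi v,v) + h(\mathrm{Ric}^\varphi v, v)]$ (using $\dif\varphi(X_v)=\lambda^2 v$), and the last term is $(n/2)\,h(v, \nabla^\varphi_{\gr^\VV \lambda^2}v)$. The combined coefficient of $h(\Delta^\varphi v, v)$ is thus $2\lambda^2$; integrating by parts,
\[
\int_M 2\lambda^2 h(\Delta^\varphi v, v)\,\nu_g
= -\int_M \bigl[\, 2\lambda^2 \abs{\nabla^\varphi v}^2 + 2 h(\nabla^\varphi_{\gr^\VV \lambda^2} v, v) \bigr]\,\nu_g.
\]
Collecting: the coefficient of $\lambda^2\abs{\nabla^\varphi v}^2$ becomes $n - 2$, the $\mathrm{Ric}^\varphi$--coefficient becomes $-n\lambda^2 + \lambda^2 + \lambda^2 = -(n-2)\lambda^2$, the $\nabla^\varphi_{\gr^\VV\lambda^2} v$--coefficient becomes $-2 + n/2 = (n-4)/2$, and $-\lambda^2 h(\Delta^\varphi_\VV v, v)$ survives intact, matching the first two lines of \eqref{heshh}.

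The main obstacle is the leftover coupling $2\langle \alpha_v, h(v, \nabla\dif\varphi)\rangle$. Since $\nabla\dif\varphi$ vanishes on $\HH\times\HH$ and $\alpha_v(\cdot, e_\gamma)=0$, only the mixed entries $\alpha_v(e_\gamma, E_i)\cdot h(v,\nabla\dif\varphi(e_\gamma, E_i))$ survive. Using the symmetry $\nabla\dif\varphi(e_\gamma, E_i) = \nabla\dif\varphi(E_i, e_\gamma) = -\dif\varphi((\nabla_{E_i}e_\gamma)^\HH)$ together with Remark 3.1$(b)(ii)$ to rewrite $\alpha_v(e_\gamma, E_i) = g(\nabla_{e_\gamma}X_v, E_i) - h(v,\nabla\dif\varphi(e_\gamma, E_i))$, and regrouping the two resulting sums by commuting $\nabla_{e_\gamma}X_v$ with $\nabla_{X_v}e_\gamma$ (which injects the Lie bracket $[e_\gamma, X_v]^\HH$) and by invoking the HC structure equation relating the mixed component of $\nabla\dif\varphi$ to $\gr^\VV \ln\lambda$, one obtains the announced expression
\[
2\int_M h\Bigl(v,\, \sum_{\gamma=1}^{m-n}\nabla\dif\varphi([e_\gamma, X_v]^\HH, e_\gamma) - 2\nabla\dif\varphi(\gr^\VV \ln\lambda, X_v)\Bigr)\,\nu_g.
\]
The bookkeeping in this final reorganization (especially the appearance of the factor $-2$ via $\gr^\VV \lambda^2 = 2\lambda^2 \gr^\VV\ln\lambda$) is where the real work lies. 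For the $m=n$ case \eqref{identhh}, every vertical index sum is empty, $\gr^\VV\lambda^2=0$ and $\Delta^\varphi_\VV v = 0$ automatically, so the last two lines of \eqref{heshh} vanish and the formula collapses to \eqref{identhh}.
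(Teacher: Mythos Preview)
Your proposal is correct and follows essentially the same route as the paper's own proof: both start from \eqref{secha}, invoke the identities $\varphi^*h|_{\HH\times\HH}=\lambda^2 g$, $\dif\varphi(X_v)=\lambda^2 v$, $\nabla\dif\varphi|_{\HH\times\HH}=0$ and $\alpha_v|_{TM\times\VV}=0$, perform the same integration by parts on $\int_M \lambda^2 h(\Delta^\varphi v,v)\,\nu_g$, and reduce the $\alpha_v$--coupling to the bracket/gradient form. The only cosmetic difference is that the paper computes $\alpha_v(e_\gamma,E_i)$ by directly substituting $v=\lambda^{-2}\dif\varphi(X_v)$ into $h(\nabla^\varphi_{e_\gamma}v,\dif\varphi(E_i))$, whereas you pass through Remark~3.1$(b)(ii)$; the resulting bookkeeping is identical.
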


\begin{proof}
As $\varphi^*h \vert _{\HH \times \HH} = \lambda^2 g$,
we have:
\begin{equation*}
\langle h \left((\nabla^{\varphi})^{2}v + R^N(v, \dif \varphi)\dif \varphi, \ v \right), \varphi^*h \rangle=
\lambda^2 h \left(\Delta^{\varphi}v - \Delta_{\VV}^{\varphi}v + \mathrm{Ric}^{\varphi}v, \ v \right).
\end{equation*}

As $v=\lambda^{-2}\dif \varphi(X_v)$, we have:
\begin{equation*}
h\left(\tr (\nabla^{\varphi})^{2}v + \mathrm{Ric}^{\varphi}v, \dif \varphi(X_v) \right) =
\lambda^2 h \left(\Delta^{\varphi}v + \mathrm{Ric}^{\varphi}v, \ v \right).
\end{equation*}

Now let $\{e_i, e_\gamma \}_{i=1,...,n}^{\gamma=1,..., m-n}$ be a local adapted orthonormal frame on $M$ (i.e. $e_\gamma$ span $\VV$ and $e_i$ span the horizontal distribution $\HH = \VV^{\perp}$). As $\nabla \dif \varphi \vert _{\HH \times \HH}=0$ and $\alpha_v \vert _{TM \times \VV}=0$, we have:
\begin{equation*}
\begin{split}
2\langle \alpha_v, h \left( v, \nabla \dif \varphi \right) \rangle &=
2\sum_{i=1}^{n}\sum_{\gamma=1}^{m-n} \alpha_v(e_{\gamma}, e_i) h \left( v, \nabla \dif \varphi(e_{\gamma}, e_i) \right) \\
&=
2\sum_{i=1}^{n}\sum_{\gamma=1}^{m-n} h(\nabla^{\varphi}_{e_{\gamma}}\lambda^{-2}\dif \varphi(X_v), \dif \varphi(e_i)) h \left( v, \nabla \dif \varphi(e_{\gamma}, e_i) \right)\\
&=
2h \left( v, \sum_{\gamma =1}^{m-n}\nabla \dif \varphi([e_{\gamma}, X_v]^{\HH}, e_{\gamma}) -2\nabla \dif \varphi(\gr^{\VV} \ln \lambda, X_v) \right).
\end{split}
\end{equation*}

Now the formula \eqref{heshh} follows directly from \eqref{secha} if we notice that:
\begin{equation*}
\int_M \lambda^2 h \left(\Delta^{\varphi}v, v\right)\nu_g=-
\int_M \left\{\lambda^2 \abs{\nabla^{\varphi}v}^2 +h\left(\nabla^{\varphi}_{\gr ^{\VV}\lambda^2}v, v \right)\right\}\nu_g.
\end{equation*}
\end{proof}

Now we can prove a first stability result:
\begin{pr}\label{sig2stable}
\noindent $(i)$ If $\mathrm{dim} M \in \{2, 3, 4 \}$, then a homothetic local diffeomorphism defined on $M$ is a stable $\sigma_2$-critical map; 

\noindent $(ii)$ If a homothetic local diffeomorphism is a stable map for the Dirichlet energy, then it is a stable $\sigma_2$-critical map too. 
\end{pr}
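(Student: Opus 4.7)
The plan is to extract both items directly from formula \eqref{identhh}, after reparametrising in terms of a tangent vector field on $M$. Since $\dim M=\dim N=:n$ and $\varphi^*h=\lambda^2 g$ with $\lambda$ constant, every $v\in\Gamma(\varphi^{-1}TN)$ has a unique preimage $V\in\Gamma(TM)$ with $v=\dif\varphi(V)$; because $\varphi$ is, up to a constant rescaling of metrics, a local isometry, $\nabla\dif\varphi=0$, and this readily gives $\abs{\nabla^\varphi v}^2=\lambda^2\abs{\nabla V}^2$, $h(\mathrm{Ric}^\varphi v,v)=\lambda^2\mathrm{Ric}^g(V,V)$ and $X_v=\lambda^2 V$. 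Substituting into \eqref{identhh} turns the $\sigma_2$-Hessian into
\begin{equation*}
\mathrm{Hess}^{\sigma_2}_\varphi(v,v)=\lambda^4\int_M\left\{(n-2)\left[\abs{\nabla V}^2-\mathrm{Ric}^g(V,V)\right]+(\di V)^2\right\}\nu_g.
\end{equation*}
At this point $(ii)$ is immediate: the bracketed term is, up to the positive factor $\lambda^2$, the integrand of the Dirichlet Hessian of $\varphi$ at $v$, non-negative by hypothesis, and it is multiplied by $n-2\geq 0$ (the case $n=1$ being trivial as $\sigma_2\equiv 0$ there); adding the non-negative $(\di V)^2$ preserves the sign.

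For $(i)$ I would bring in the integrated Weitzenb\"ock identity for the $1$-form $\alpha:=V^\flat$ on the closed manifold $M$. Applying $\Delta_H=\nabla^*\nabla+\mathrm{Ric}$ and integrating by parts gives $\int[\abs{\dif\alpha}^2+(\delta\alpha)^2]\nu_g=\int[\abs{\nabla V}^2+\mathrm{Ric}^g(V,V)]\nu_g$; decomposing $\nabla V$ into its symmetric and antisymmetric parts and using $\abs{\mathrm{Sym}\nabla V}^2=\tfrac{1}{4}\abs{\mathcal{L}_V g}^2$ rearranges this into
\begin{equation*}
\int_M\left[\abs{\nabla V}^2-\mathrm{Ric}^g(V,V)\right]\nu_g=\int_M\left[\tfrac{1}{2}\abs{\mathcal{L}_V g}^2-(\di V)^2\right]\nu_g,
\end{equation*}
so the Hessian formula collapses to
\begin{equation*}
\mathrm{Hess}^{\sigma_2}_\varphi(v,v)=\lambda^4\int_M\left[\tfrac{n-2}{2}\abs{\mathcal{L}_V g}^2+(3-n)(\di V)^2\right]\nu_g.
\end{equation*}

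Three cases then close the argument. For $n=2$ only the divergence term survives and is manifestly non-negative; for $n=3$ only the Lie-derivative term survives, likewise non-negative. The delicate case is $n=4$, where the integrand reduces to $\abs{\mathcal{L}_V g}^2-(\di V)^2$: here I would invoke the pointwise Cauchy-Schwarz bound $(\mathrm{tr}\,A)^2\leq n\abs{A}^2$ applied to the symmetric tensor $A:=\mathrm{Sym}\nabla V$, giving $(\di V)^2\leq\tfrac{n}{4}\abs{\mathcal{L}_V g}^2$, which for $n=4$ reads exactly $(\di V)^2\leq\abs{\mathcal{L}_V g}^2$. This last step is the main obstacle: the Cauchy-Schwarz bound saturates on non-Killing conformal vector fields, so the same strategy cannot survive past $n=4$, consistent with the known $\sigma_2$-instability of $\mathrm{Id}_{\Ss^n}$ along conformal Killing fields for $n\geq 5$.
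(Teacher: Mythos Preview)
Your proof is correct and follows essentially the same route as the paper: you reach the same expression of the $\sigma_2$-Hessian in terms of $\tfrac{1}{2}\abs{\mathcal{L}_V g}^2$ and $(\di V)^2$ via the integrated Bochner/Yano identity (the paper simply cites Yano's identity), and then conclude with the same Cauchy--Schwarz/Newton bound $(\di V)^2\le \tfrac{n}{4}\abs{\mathcal{L}_V g}^2$, the only cosmetic differences being your use of $V$ rather than $X_v=\lambda^2 V$ and your case-by-case treatment of $n=2,3,4$ where the paper handles all three at once. One small wording issue in your part~$(ii)$: Dirichlet stability gives non-negativity of the \emph{integral} of the bracketed term, not of the integrand pointwise---but since you are multiplying that integral by the constant $n-2\ge 0$ and adding a non-negative integral, the argument goes through unchanged.
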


\begin{proof}
$(i)$ \ As in this case $v=\lambda^{-2}\dif \varphi (X_v)$, we can check that:
\begin{equation*}
\begin{split}
\abs{\nabla^{\varphi} v}^2 - h(\mathrm{Ric}^{\varphi}v, v) 
=\lambda ^{-2}\left(\abs{\nabla X_v}^2 - \mathrm{Ric}^{M}(X_v, X_v)\right). 
\end{split}
\end{equation*}

Therefore, according to \eqref{identhh} we have:
\begin{equation}\label{hesig2}
\begin{split}
\mathrm{Hess}^{\sigma_{2}}_{\varphi}(v,v)
=& \int_M \left\{(n-2) \left[ \abs{\nabla X_v}^2 - \mathrm{Ric}^{M}(X_v, X_v) \right] + (\mathrm{div}X_v)^2 \right\} \nu _g.
\end{split}
\end{equation}

Employing now Yano's identity \cite{yan}
\begin{equation*}
\int_M \left\{\abs{\nabla X}^2 - \mathrm{Ric}(X, X) + (\mathrm{div}X)^2  - \frac{1}{2}\abs{\mathcal{L}_{X}g}^2 \right\} \nu _g=0,
\end{equation*}
we get
\begin{equation}\label{hsig2}
\begin{split}
\mathrm{Hess}^{\sigma_{2}}_{\varphi}(v,v)
=& \int_{M} \left\{\frac{n-2}{2} \abs{\mathcal{L}_{X_v}g}^2 -(n-3)(\mathrm{div}X_v)^2 \right\} \nu _g.
\end{split}
\end{equation}

Notice now that, according to Newton inequalities, we have 
\begin{equation}\label{newton}
\frac{1}{2}\abs{\mathcal{L}_{X_v}g}^2 \geq 
2\sum_i g(\nabla_{e_i}X_v , e_i)^2 \geq
\frac{2}{n} \left[\sum_i g(\nabla_{e_i}X_v , e_i)\right]^2=
\frac{2}{n}(\di X_v)^2,
\end{equation}
where the equality is reached when $X_v$ is a conformal vector field.

Therefore:
\begin{equation*}
\mathrm{Hess}^{\sigma_{2}}_{\varphi}(v,v) \geq \int_M \left[ \frac{2(n-2)}{n} -(n-3) \right](\di X_v)^2 \nu_g = 
\int_M \frac{(4-n)(n-1)}{n} (\di X_v)^2 \nu_g
\end{equation*}
which is nonnegative if $n\leq 4$.
\end{proof}

\begin{re}[HH submersions with 2-dim. target]\label{cohh}
Formula \eqref{heshh} indicates us that HH submersions with 2-dimensional target and  $\lambda \equiv$ constant are certainly privileged maps for the $\mathcal{E}_{\sigma_2}$ variational problem. 

More precisely, let us consider $\varphi: (M^3, g) \to (N^2, h)$ a harmonic homothetic submersion, that is $\gr \lambda=0$ and the fibres are one dimensional and minimal. Then, according to \cite[Prop. 12.3.1]{ud} the fundamental vertical vector (denoted hereafter by $\xi$) of $\varphi$ is Killing. Notice that the term that contains the vertical part of the rough Laplacian along $\varphi$ is given by 
\begin{equation*}
h\left(\Delta_{\VV}^{\varphi}v, v \right)=
\xi h\left(\nabla^{\varphi}_{\xi}v, v \right)
-\norm{\nabla^{\varphi}_{\xi}v}^2 - h\left(\nabla^{\varphi}_{\nabla_{\xi}\xi}v, v \right).
\end{equation*}
But $\nabla_{\xi}\xi=0$ and moreover 
$\int _M \xi h\left(\nabla^{\varphi}_{\xi}v, v \right) \nu_g=0$ because we always have $\int_M X(f) \nu_g= \int_M \di (fX) \nu_g =0$ for a Killing vector field $X$.

Consequently the Hessian of $\varphi$ simplifies to (compare to \cite[(3.10)]{slo}):
\begin{equation}\label{2hh}
\begin{split}
\mathrm{Hess}^{\sigma_2}_{\varphi}(v,v)
&= \int_M \left\{ (\mathrm{div}X_v)^2 + 
\lambda^2 \abs{\nabla^{\varphi} v}_{\VV}^2 + 
2h \left( v, \nabla \dif \varphi([\xi, X_v]^{\HH}, \xi)\right)\right \} \nu_g \\
&= \int_M \left\{ (\mathrm{div}X_v)^2 +  \norm{[\xi, X_v]^{\HH}}^2 + 
2g \left(\nabla_{X_v}\xi, [\xi, X_v]^{\HH}\right) \right \} \nu_g \\
&= \int_M \left\{ (\mathrm{div}X_v)^2 +  \norm{\nabla_{\xi} X_v}^2 - \norm{\nabla_{X_v} \xi}^2 \right \} \nu_g.
\end{split}
\end{equation}

Let us illustrate this situation with the following examples of $\sigma_2$-stability:

\noindent $(a)$ if in addition $\varphi$ has integrable horizontal distribution (i.e. in particular, it is a totally geodesic map and, after a homothetic change of the (co)domain metric, it is locally the projection of a Riemannian product), then it is a stable $\sigma_2$-critical map. This is because the term 
$\nabla \dif \varphi([\xi, X_v]^{\HH}, \xi)$ will be zero in this case.

\medskip
\noindent $(b)$ \textit{the Hopf map} $\mathbb{S}^3 \to \mathbb{S}^2$ between unit spheres with their standard metrics is a stable $\sigma_2$--critical map, according to \cite[Theorem 5.2]{sve} (when $n=2$ the strong coupling limit of the Faddeev-Hopf energy coincides with the $\sigma_2$-energy). It is moreover a minimizer in its homotopy class, cf. \cite{svee}. Notice that this example will be essentially unique, among semiconformal harmonic submersions from $\mathbb{S}^3$ onto a Riemann surface, cf. \cite{bai}.

It is worth to notice that $\sigma_2$-stability of the Hopf map implies its stability for the 4-energy (we know that it is a true minimum, according to \cite{riv}). To see this, take into account Remark 4.4 below to write the 4-Hessian  as 
\begin{equation*}
\begin{split}
\mathrm{Hess}_{\varphi}^{\mathcal{E}_4}(v,v)
&= \int_M (\mathrm{div}X_v)^2 + \frac{1}{2}\abs{\Dd^{\varphi}v}_{\mathcal{H}}^2 + \norm{\nabla^{\varphi}_{\xi}v}^2 - 2 h \left(\nabla^{\varphi}_{\xi} v, Jv \right)\\
&= \int_M (\mathrm{div}X_v)^2 + \frac{1}{2}\abs{\Dd^{\varphi}v}_{\mathcal{H}}^2 + \norm{[\xi, X_v]}^2 + 
2g \left(\nabla_{X_v}\xi, [\xi, X_v]\right),
\end{split}
\end{equation*}
were we use the fact that the Hopf map is a $(\phi,J)$-holomorphic Riemannian submersion from a Sasakian to a K\"ahler manifold.
\end{re}

\medskip

In the end of this section, let us consider the stability of another class of harmonic $\sigma_2$--critical mappings, namely the one given by Corollary \ref{crit}($iii$). To facilitate the exposition consider the simpler case of holomorphic maps between compact K\"ahler manifolds, $\varphi:(M^{2m},J,g)\to (N^{2n},J^N,h)$ (which are in particular PHH harmonic maps).

Define the following connexion in the pull-back bundle, cf. \cite{ura}:
$$
\Dd^{\varphi}v(X):=\nabla^{\varphi}_{JX} v - J^N \nabla^{\varphi}_X v, \quad \forall X\in \Gamma(TM),
$$
that has the immediate property $\Dd^{\varphi}v(JX) + J^N\Dd^{\varphi}v(X)=0, \ \forall X$.

In an orthonormal adapted (local) frame, we can check that:
\begin{equation*}
\begin{split}
&(\nabla^{\varphi})_{e_k, e_k}^{2}v + (\nabla^{\varphi})_{Je_k, Je_k}^{2}v + R^N(v, \dif \varphi(e_k))\dif \varphi(e_k)
+ R^N(v, \dif \varphi(Je_k))\dif \varphi(Je_k) \\
&=J^N \left(\nabla^{\varphi}_{e_k}\Dd^{\varphi}v(e_k) +
\nabla^{\varphi}_{Je_k}\Dd^{\varphi}v(Je_k)-
\nabla^{\varphi}_{\nabla_{e_k}e_k + \nabla_{Je_k}Je_k}\Dd^{\varphi}v\right), \quad \forall k = 1,...,m.
\end{split}
\end{equation*}

From this identity we can deduce that
\begin{equation}\label{dfi}
\begin{split}
&h\left((\nabla^{\varphi})_{e_k, e_k}^{2}v + (\nabla^{\varphi})_{Je_k, Je_k}^{2}v + R^N(v, \dif \varphi(e_k))\dif \varphi(e_k)
+ R^N(v, \dif \varphi(Je_k))\dif \varphi(Je_k) , \ w \right) \\
&=-h\left(\Dd^{\varphi}v(e_k), \ \Dd^{\varphi}w(e_k)\right)- [g(\nabla_{e_k}X_0, e_k) + g(\nabla_{Je_k}X_0, Je_k)], \quad \forall k
= 1,...,m,
\end{split}
\end{equation}
where $X_0$ is defined by $h \left(\Dd^{\varphi}v(Y), J^N w \right):=g(X_0, Y), \ \forall Y$.

\begin{re}
Recall that the Hessian of a harmonic map, for the Dirichlet energy, is given by (see e.g. \cite[p. 92]{ud}):
\begin{equation*}
\mathrm{Hess}_{\varphi}(v,w)= -\int_M h \left(\tr [(\nabla^{\varphi})^{2}v + R^N(v, \dif \varphi)\dif \varphi], w\right)\nu_g \ :=
\int_M h \left(\mathfrak{J}^{\varphi}(v), w \right) \nu_g.
\end{equation*}
For a holomorphic map between compact K\"ahler manifolds, taking the sum in \eqref{dfi} gives us: 
\begin{equation}\label{jachol}
h\left(\tr [(\nabla^{\varphi})^{2}v + R^N(v, \dif \varphi)\dif \varphi], v \right) =
-\frac{1}{2}\abs{\Dd^{\varphi}v}^2 -\di X_1,
\end{equation}
where $X_1$ is defined by $h \left(\Dd^{\varphi}v(Y), J^N v \right):=g(X_1, Y), \ \forall Y$. Therefore $\mathrm{Hess}_{\varphi}(v,v)= \frac{1}{2}\int_M \abs{\Dd^{\varphi}v}^2 \nu_g$ which proves the stability (as harmonic maps) of holomorphic maps between compact K\"ahler manifolds, an infinitesimal version of a classical Lichnerowicz result \cite{ura}.
\end{re}

Now suppose in addition that a holomorphic map between compact K\"ahler manifolds has $\gr e(\varphi) \in \Ker \dif \varphi$. Then it becomes a $\sigma_2$--critical map. By standard techniques, using \eqref{dfi} and a trick similar to \eqref{tric}, we can check the following

\begin{co}[$\sigma_2$--Hessian of holomorphic $\sigma_2$--critical maps between K\"ahler manifolds]
\begin{equation}\label{secho}
\begin{split}
\mathrm{Hess}^{\sigma_2}_{\varphi}(v,v) 
&= \int_M \left\{ (\di X_v)^2 + e(\varphi)\abs{\Dd^{\varphi}v}^2
-\frac{1}{2}\langle \Dd^{\varphi}v, \Dd^{\varphi}v \circ \Cc_{\varphi} \rangle
- \frac{1}{2} \langle \Dd^{\varphi}v, \Dd^{\varphi}\dif \varphi(X_v) \rangle \right\} \nu _g\\
&+\int_M \left\{ 2\langle \alpha_v, h(v, \nabla \dif \varphi)\rangle - h\left(\nabla^{\varphi}_{J\gr ^{\VV}e(\varphi)}v, J^N v \right)\right\} \nu _g ,
\end{split}
\end{equation}
where $\langle \Dd^{\varphi}v, \Dd^{\varphi}v \circ \Cc_{\varphi} \rangle = 2\sum_{k=1}^{m} \lambda_{k}^{2} \norm{\Dd^{\varphi}v(e_k)}^2$.
\end{co}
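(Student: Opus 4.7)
The plan is to reduce the harmonic $\sigma_2$--critical Hessian formula \eqref{secha} to \eqref{secho} by systematically exploiting the K\"ahler structure through the connexion $\Dd^{\varphi}$, together with the identities \eqref{dfi}, \eqref{jachol} and the harmonic-map trick underlying \eqref{tric}.

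First I would choose a local orthonormal frame of the form $\{e_1, Je_1, \ldots, e_m, Je_m\}$ in which each pair $(e_k, Je_k)$ diagonalises $\varphi^*h$ with common eigenvalue $\lambda_k^2$; such a frame exists because, $\varphi$ being holomorphic between K\"ahler manifolds, the $\varphi^*h$-eigenspaces are $J$-invariant (this is precisely the PHWC property recalled in Section 2). In this frame $\sum_k \lambda_k^2\abs{\Dd^{\varphi}v(e_k)}^2 = \tfrac{1}{2}\langle \Dd^{\varphi}v, \Dd^{\varphi}v \circ \Cc_\varphi\rangle$, matching the notation in the statement. Then I would integrate the first term $2e(\varphi)\abs{\nabla^{\varphi}v}^2$ in \eqref{secha} by parts via the weighted formula $\int_M f\abs{\nabla^{\varphi}v}^2\nu_g = -\int_M f\,h(\tr (\nabla^{\varphi})^2v, v)\nu_g - \int_M h(\nabla^{\varphi}_{\gr f}v, v)\nu_g$ with $f=2e(\varphi)$, combine with $-2e(\varphi)h(\mathrm{Ric}^{\varphi}v,v)$, and invoke \eqref{jachol} to rewrite the result as $\int e(\varphi)\abs{\Dd^{\varphi}v}^2 + 2\int e(\varphi)\di X_1 - 2\int h(\nabla^{\varphi}_{\gr e(\varphi)}v,v)$, where $g(X_1,Y):=h(\Dd^{\varphi}v(Y), J^N v)$; a further integration by parts converts $2\int e(\varphi)\di X_1$ into $-2\int h(\Dd^{\varphi}v(\gr e(\varphi)), J^N v)$.

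Next, for the $\varphi^*h$-weighted term in \eqref{secha} I would apply \eqref{dfi} pair-by-pair with $w=v$, multiply each instance by $\lambda_k^2$ and sum: the $\abs{\Dd^{\varphi}v(e_k)}^2$ pieces assemble into $-\tfrac{1}{2}\langle \Dd^{\varphi}v, \Dd^{\varphi}v\circ \Cc_\varphi\rangle$, while the remaining weighted divergence $\sum_i \lambda_i^2 g(\nabla_{e_i} X_1, e_i)$ is rearranged through the algebraic identity of \eqref{tric} --- which holds for an arbitrary vector field in place of $X_v$ --- to give $\di \widetilde X_1 - (\di\varphi^*h)(X_1)$; harmonicity of $\varphi$ together with \eqref{stres} identifies $(\di\varphi^*h)(X_1) = g(X_1,\gr e(\varphi)) = h(\Dd^{\varphi}v(\gr e(\varphi)), J^N v)$. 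After integration, the exact divergence drops and this term contributes $+\int h(\Dd^{\varphi}v(\gr e(\varphi)), J^N v)$. The same pair-summed version of \eqref{dfi}, but with $w=\dif\varphi(X_v)$, disposes of the remaining trace term, producing $-\tfrac{1}{2}\int \langle\Dd^{\varphi}v, \Dd^{\varphi}\dif\varphi(X_v)\rangle$ up to another exact divergence.

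Finally I would collect everything. The $(\di X_v)^2$ and $2\langle\alpha_v, h(v,\nabla\dif\varphi)\rangle$ terms pass through unchanged, and the produced $e(\varphi)\abs{\Dd^{\varphi}v}^2$, $-\tfrac{1}{2}\langle \Dd^{\varphi}v, \Dd^{\varphi}v\circ\Cc_\varphi\rangle$ and $-\tfrac{1}{2}\langle\Dd^{\varphi}v, \Dd^{\varphi}\dif\varphi(X_v)\rangle$ match \eqref{secho}. Three residual $\gr e(\varphi)$-terms remain: $-2\int h(\nabla^{\varphi}_{\gr e(\varphi)}v, v)$ and $\int h(v, \nabla^{\varphi}_{\gr e(\varphi)}v)$ from the second step and \eqref{secha}, $-2\int h(\Dd^{\varphi}v(\gr e(\varphi)), J^N v)$ from the unpacking of $2\int e(\varphi)\di X_1$, and $+\int h(\Dd^{\varphi}v(\gr e(\varphi)), J^N v)$ from the third step. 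Using $\Dd^{\varphi}v(Y) = \nabla^{\varphi}_{JY}v - J^N\nabla^{\varphi}_Y v$ and the $h$-skew-symmetry of $J^N$ to rewrite $h(\Dd^{\varphi}v(Y), J^N v) = h(\nabla^{\varphi}_{JY}v, J^N v) - h(\nabla^{\varphi}_Y v, v)$, the $h(\nabla^{\varphi}_{\gr e(\varphi)}v, v)$-contributions cancel, leaving exactly $-\int h(\nabla^{\varphi}_{J\gr e(\varphi)}v, J^N v)$. Corollary~\ref{crit}(iii) applies because $\varphi$ is a PHH harmonic $\sigma_2$--critical map, so $\gr e(\varphi)=\gr^{\VV}e(\varphi)$, and \eqref{secho} follows. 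The main obstacle will be sign bookkeeping: two independent integrations by parts each produce a $\gr e(\varphi)$-correction, and converting one $\Dd^{\varphi}$-term back into $\nabla^{\varphi}$-language introduces a further such correction --- a single sign mistake anywhere leaves a spurious $\nabla^{\varphi}_{\gr e(\varphi)}v$ in the final answer.
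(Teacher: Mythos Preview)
Your proposal is correct and follows exactly the route the paper indicates: the paper's own ``proof'' consists of the single sentence ``By standard techniques, using \eqref{dfi} and a trick similar to \eqref{tric}, we can check the following,'' and your write-up supplies precisely those details --- the pairwise application of \eqref{dfi} (with $w=v$ for the $\varphi^*h$-weighted term and $w=\dif\varphi(X_v)$ for the trace term), the \eqref{tric}-style rearrangement of the weighted divergence $\sum_i\lambda_i^2 g(\nabla_{e_i}X_1,e_i)$, and the final cancellation of the $\gr e(\varphi)$-corrections via $h(\Dd^{\varphi}v(Y),J^Nv)=h(\nabla^{\varphi}_{JY}v,J^Nv)-h(\nabla^{\varphi}_Yv,v)$. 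The bookkeeping you flag as the main obstacle is handled correctly.
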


Let us point our some straightforward consequences of the above formula:

\medskip
\noindent $(i)$ a homothetic local diffeomorphism between compact K\"ahler manifolds is a stable $\sigma_2$--critical map;

\medskip
\noindent $(ii)$ a semiconformal holomorphic map from a compact K\"ahler manifold of non-negative sectional curvature to a K\"ahler manifold which is not a surface must be totally geodesic, cf. \cite{sven} (in particular, it must be a homothetic submersion). In this case it will be a stable $\sigma_2$--critical map.

\begin{re}
The existence of nontrivial homothetic $(\phi,J)$-holomorphic submersions from a Sasakian to a K\"ahler manifold is less restricted, as shown by the example of the Hopf map. 

In order to analyse their stability we can easily rewrite the $\sigma_2$-Hessian for $\sigma_2$-critical $(\phi,J)$-holomorphic submersions from a Sasakian manifold $(M^{2m+1}, \phi, \xi, \eta, g)$ to a K\"ahler one, starting from the following analogue of the relation \eqref{dfi}, true for all $k=1,...,m$:
\begin{equation}\label{dfii}
\begin{split}
&h\left((\nabla^{\varphi})_{e_k, e_k}^{2}v + (\nabla^{\varphi})_{\phi e_k, \phi e_k}^{2}v + R^N(v, \dif \varphi(e_k))\dif \varphi(e_k)
+ R^N(v, \dif \varphi(\phi e_k))\dif \varphi(\phi e_k) , \ w \right) \\
&=-h\left(\Dd^{\varphi}v(e_k), \ \Dd^{\varphi}w(e_k)\right)- [g(\nabla_{e_k}X_0, e_k) + g(\nabla_{\phi e_k}X_0, \phi e_k)] +
2h(\nabla^{\varphi}_{\xi}v, J^N w),
\end{split}
\end{equation}
where $\Dd^{\varphi} v (X):=\nabla^{\varphi}_{\phi X} v - J^N \nabla^{\varphi}_X v$ and $X_0$ is defined similarly.
\end{re}

\section{Applications}
\subsection{A revisited stability result for full $\sigma_{1,2}$-energy}
In this subsection we deal with stability of homothetic (local) diffeomorphisms for the full energy $\mathcal{E}_{\sigma_{1,2}}$ defined by the relation \eqref{generg}. Essentially we prove again in a general context a stability result from \cite{los, man}, our general approach showing clearly that it is specific to 2 and 3-dimensional cases.

Obviously, a mapping $\varphi$ will be $\sigma_{1,2}$--critical if it satisfies the following Euler-Lagrange equations
\begin{equation*}
\tau(\varphi) + \kappa  \tau_{\sigma_2}(\varphi)=0.
\end{equation*}
Moreover, it will be stable if
\begin{equation}\label{fullhess}
\begin{split}
\mathrm{Hess}^{\sigma_{1,2}}_{\varphi}(v,v) =& 
\mathrm{Hess}^{\sigma_{1}}_{\varphi}(v,v) + \kappa \mathrm{Hess}^{\sigma_{2}}_{\varphi}(v,v)
\end{split}
\end{equation}
is positive for all $v$.

Let us suppose that $\varphi$ is a HH map between spaces of equal dimensions $m=n$ (if $n\geq 3$ it is a homothetic local diffeomorphism, cf. \cite[Theorem 11.4.6]{ud}), which is always $\sigma_{1,2}$-critical.
As in the proof of Proposition \ref{sig2stable}, replacing $v=\lambda^{-2}\dif \varphi (X_v)$, we get the following expression for $\mathrm{Hess}^{\sigma_{1}}_{\varphi}(v,v)$:
\begin{equation*}
\begin{split}
\int_M  \left[ \abs{\nabla^{\varphi} v}^2 - h(\mathrm{Ric}^{\varphi}v, v) \right] \nu _g &=
\int_M  \lambda ^{-2}\left[\abs{\nabla X_v}^2 - \mathrm{Ric}^{M}(X_v, X_v)\right]\nu _g\\
&= \int_M \lambda^{-2}\left\{ \frac{1}{2}\abs{\mathcal{L}_{X_v}g}^2 -(\mathrm{div}X_v)^2  \right\}\nu _g. 
\end{split}
\end{equation*}

Combining with \eqref{hsig2}, we obtain the full Hessian in the form:
\begin{equation*}
\begin{split}
\mathrm{Hess}^{\sigma_{1,2}}_{\varphi}(v,v)
=& \int_{M} \left\{\frac{\lambda ^{-2}+ (n-2)\kappa}{2} \abs{\mathcal{L}_{X_v}g}^2 -(\lambda ^{-2}+ (n-3)\kappa)(\mathrm{div}X_v)^2 \right\} \nu _g
\end{split}
\end{equation*}

Using again the inequality \eqref{newton}, we have 
\begin{equation*}
\mathrm{Hess}^{\sigma_{1,2}}_{\varphi}(v,v) \geq \int_M \left[ \frac{2}{n}(\lambda ^{-2}+ (n-2)\kappa) - (\lambda ^{-2}+ (n-3)\kappa) \right]
(\di X_v)^2 \nu_g .
\end{equation*}

The right hand side term can be positive (for non-constant maps) only when $n=2$ (trivially) and $n=3$. We can conclude as follows:

\begin{pr}\label{hld}
A homothetic local diffeomorphism between $3$-dimensional manifolds is a stable critical point for $\mathcal{E}_{\sigma_{1,2}}$ if
\begin{equation}\label{manton}
\lambda \geq \frac{1}{\sqrt{2 \kappa}}.
\end{equation}

Supposing that on $M$ there exists a conformal vector field which is not Killing, then no homothetic local diffeomorphism defined on $M$ can be a stable critical point for $\mathcal{E}_{\sigma_{1,2}}$ if one of the following conditions holds:

\noindent $(i)$ \  $n=3$ and $\lambda < \frac{1}{\sqrt{2 \kappa}}$; \qquad
\noindent $(ii)$ \ $n \geq 4$.
\end{pr}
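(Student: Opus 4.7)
My plan is to extract both halves of Proposition \ref{hld} from the variational formula displayed on the lines immediately preceding its statement, namely
\[
\mathrm{Hess}^{\sigma_{1,2}}_{\varphi}(v,v) = \int_{M} \left\{\frac{\lambda^{-2} + (n-2)\kappa}{2}\abs{\mathcal{L}_{X_v}g}^2 - (\lambda^{-2} + (n-3)\kappa)(\di X_v)^2\right\} \nu_g.
\]
For the sufficient condition \eqref{manton} I would simply specialise to $n=3$ the lower bound already derived via Newton's inequality \eqref{newton}: the bracketed coefficient there becomes $\tfrac{1}{3}(2\kappa - \lambda^{-2})$, which is non-negative precisely under the hypothesis $\lambda \geq 1/\sqrt{2\kappa}$. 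Since $v \leftrightarrow X_v$ is a bijection between $\Gamma(\varphi^{-1}TN)$ and $\Gamma(TM)$ (the map $\dif\varphi$ being a pointwise isomorphism because $\varphi$ is a homothetic local diffeomorphism), this already establishes weak stability.

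For the two instability assertions the key step is to exhibit a single variation with strictly negative Hessian. The natural choice is $v := \lambda^{-2}\dif\varphi(X)$, where $X$ is the hypothesised conformal non-Killing vector field on $M$. What motivates this choice is that the only weakening step in the derivation of the lower bound was \eqref{newton}, whose equality case consists exactly of the conformal vector fields. Thus $\tfrac{1}{2}\abs{\mathcal{L}_X g}^2 = \tfrac{2}{n}(\di X)^2$ pointwise, and substituting into the exact Hessian formula above collapses it to
\[
\mathrm{Hess}^{\sigma_{1,2}}_{\varphi}(v,v) = \frac{-(n-2)\lambda^{-2} - (n-1)(n-4)\kappa}{n}\int_M (\di X)^2\, \nu_g.
\]

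A short case-by-case sign analysis of the coefficient then finishes the job: for $n=3$ it equals $(2\kappa - \lambda^{-2})/3 < 0$ under the hypothesis of (i); for $n=4$ it reduces to $-\lambda^{-2}/2 < 0$; for $n \geq 5$ both summands of the numerator are non-positive with the first strictly negative. To conclude instability one needs $\di X \not\equiv 0$, which is automatic since a conformal vector field with identically vanishing divergence is Killing (take the trace of $\mathcal{L}_X g = \tfrac{2}{n}(\di X)g$). I do not anticipate a genuine obstacle here: the Hessian identity from the preceding computation and the equality case of \eqref{newton} have been arranged so that the stable and unstable regimes meet exactly at $\lambda = 1/\sqrt{2\kappa}$, and the remaining dimension-dependent algebra is elementary. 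If anything deserves care, it is simply the verification that the test variation $v$ is admissible (which relies on $\dif\varphi$ being a fibrewise isomorphism) and the observation that the divergence of a non-Killing conformal field is non-trivial.
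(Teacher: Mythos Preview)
Your proposal is correct and follows essentially the same route as the paper: the stability half is precisely the specialisation to $n=3$ of the Newton-inequality lower bound derived just before the proposition, and the instability half is the equality case of \eqref{newton} applied to the hypothesised conformal non-Killing field. The paper leaves the instability argument implicit (it merely flags the equality case after \eqref{newton} and builds the existence of a non-Killing conformal field into the hypothesis), whereas you spell it out by computing the exact Hessian on the test variation and checking signs dimension by dimension; but the underlying idea is identical.
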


\begin{re}
\ ($a$) The condition \eqref{manton} coincides with the one found in \cite{los, man} for the identity map $\mathrm{Id}_{M^3}$ (taking $\kappa=1$). 

\medskip
\noindent ($b$) According to \eqref{identhh}, if a homothetic local diffeomorphism (in particular, $\mathrm{Id}_{M^m}$, $m \geq 2$) is stable (as critical point) for the Dirichlet energy, then it is stable also for the full energy $\mathcal{E}_{\sigma_{1,2}}$, for all values of the coupling constant and the dilation factor. This will be true for any compact Riemannian manifold of constant curvature except for the standard unit sphere, cf. \cite{urak}. 

\medskip
\noindent ($c$) When $m=n=3$ and $\kappa=1$, it has been proved \cite{los} that if $\lambda \geq 1$, then diffeomorphic homotheties are, up to isometries, the only absolute minimizers of the Skyrme energy among all maps of a given degree. Notice however that for mappings between 3-spheres of different radii (with canonical metrics) this result provides little information as a homothety must have degree 1 in this case.
\end{re}

Another example of stable critical map for the \textit{full} energy $\mathcal{E}_{\sigma_{1,2}}$ is the Hopf map, according to \cite[Theorem 5.3]{sve}. Analogous to Proposition \ref{hld}, we have stability only when the coupling constant exceeds a critical value ($\kappa \geq 1$). Nevertheless the nature of the argument is different.

\subsection{Constructing relevant maps between spheres} In this final part we shall focus on examples of interest for the original physical models, that is mainly mappings between spheres: $\Ss_{R}^3 \to \Ss^3$ and $\Ss_{R}^3 \to \Ss^2$.

Let us first recall how can we parametrise the unit 3-sphere:

\begin{enumerate}
\item[(a)] (Join of circles: $\Ss^{3} = \Ss^{1} \ast \Ss^{1}$).  
$$
\{\left(\cos s \cdot e^{\ii x_1}, \ \sin s \cdot e^{\ii x_2} \right) \ \vert \ (x_1, x_2, s) \in [0, 2\pi)^2 \times [0, \pi / 2] \}.
$$ 
Then the standard Riemannian metric and contact form are given by: 
$$
g = \cos^{2}s \, \dif x_1^2 + \sin^{2}s \, \dif x_2^2 + \dif s^2 ; \quad 
\eta = \cos^{2}s \, \dif x_1 - \sin^{2}s \, \dif x_2.
$$

\item[(b)] (Suspension of the 2-sphere: $\Ss^{3} = \Ss^{0} \ast \Ss^{2}$).  
$$
\{\left(\cos s , \ \sin s \cdot (\cos t, \ \sin t \, e^{\ii x})\right) \  \vert \  (x, s, t) \in [0, 2\pi) \times [0, \pi]^2 \}.
$$ 
Then the standard Riemannian metric and contact form are given by: 
$$
g = \dif s^2 + \sin^{2}s \left( \dif t^2 + \sin^{2}t \, \dif x^2 \right),
\quad
\eta = \cos t \dif s - \frac{\sin 2s}{2}\sin t \dif t
+ \sin^2 s \sin^2 t \dif x.
$$

\item[(c)] (Unit tangent bundle of $\Ss^{2}$).  
$$
\{e^{\ii \theta}\left(\cos s \, \textbf{x} + \ii \sin s \, \textbf{y}\right) \  \vert \  (\theta, s) \in [0, 2\pi) \times [0, \pi /4], (\textbf{x}, \textbf{y}) \in S_{2,2} \},
$$ 
where $S_{2,2}=\{(\textbf{x},\textbf{y}) \in \RR^2 \oplus \RR^2  \ \vert \ \norm{\textbf{x}}=\norm{\textbf{y}}=1, \langle \textbf{x} , \textbf{y} \rangle =0\}$ is the Stiefel manifold of orthonormal 2-frames in the plane. Taking into account that $(\theta, (\textbf{x},\textbf{y}))$ and $(\theta + \pi, (-\textbf{x}, -\textbf{y}))$ corresponds to the same point we get a parameterisation of $\Ss^{3}$ (for more details about this construction, see \cite{ud, xi}).\\ 
The standard Riemannian metric and contact form are given by: 
$$
g = \dif s^2 + \dif \theta^2 + \dif \mu^2 + 2\sin(2s) \cdot \dif \theta \dif \mu ,\quad 
\eta = \dif \theta + \sin 2s \, \dif \mu
$$
where we have replaced $\textbf{x} = (\cos \mu, \sin \mu)$,  $\textbf{y}=(\sin \mu, -\cos \mu)$, $\mu \in [0, 2\pi)$.
\end{enumerate}
\begin{re}
Adapting the above parameterisations to a radius $R$ sphere is obvious. Nevertheless, the induced metric is no more an associated metric for the standard (induced) contact form. To have a contact metric structure on $\Ss_{R}^{3}$ one should either rescale both metric and contact form (with $R^{-2}$ and $R^{-1}$ respectively) or "squash" the sphere by taking the metric $\tilde g = R^{-1}g + (1 - R^{-1})\eta \otimes \eta$ (see \cite[pg 50]{ble} for more details).
\end{re}
On the other hand, the two-sphere $\Ss^2$ will be always parametrised as the suspension of the circle, that is 
$\{\left(\cos u , \ \sin u \cdot e^{\ii v})\right) \  \vert \  (u, v) \in  [0, \pi] \times [0, 2\pi) \}$ so that the standard metric and the symplectic (area) 2-form are given by:
$$
h = \dif u^2 + \sin^{2}u \, \dif v^2 ; \quad \Omega = -\frac{1}{2}\sin u \dif u \wedge \dif v. 
$$

Let us analyse now some concrete mappings onto spheres (endowed with the standard metric unless otherwise stated) that exhibit some symmetries, including the \textit{equivariance} with respect to (composition with) isoparametric functions cf. \cite[Chapter 13]{ud}.

\begin{ex}[Skyrme's Hedgehog map] Let $\varphi_{f}: \RR^{3} \to \Ss^3$ be given by
\begin{equation}\label{hedge}
r \left(\cos t , \ \sin t \cdot  e^{\ii x}\right) 
\mapsto 
\left(\cos f(r) , \ \sin f(r) (\cos t , \ \sin t \cdot  e^{\ii x})\right) ,
\end{equation}
where $f(0)=\pi$ and $f(\infty)=0$. This is a degree 1 map whose eigenvalues and corresponding Skyrme equations can be found in \cite{mant}.
\end{ex}

\begin{ex}[Suspension of mappings between 2-spheres] Let $\varphi_{f,q,r}: \Ss^{3} \to \Ss^3$ be given by
\begin{equation}\label{fmap}
\left(\cos s , \ \sin s (\cos t , \ \sin t \cdot  e^{\ii x})\right) 
\mapsto 
\left(\cos f(s) , \ \sin f(s) (\cos q(t,x) , \ \sin q(t,x) \cdot  e^{\ii r(t,x)})\right) ,
\end{equation}
where $f$, $q$, $r$ are smooth real functions obeying appropriate boundary conditions. When they are suspensions of holomorphic maps between 2-spheres, these maps constitute the \textit{rational map ansatz} \cite{rat} (after arranging the domain of definition to be $\RR^3$ as in the previous example). Their eigenvalues $\lambda_{1}^{2}$, $\lambda_{2}^{2} = \lambda_{3}^{2}$, one of them with double multiplicity due to the transversal holomorphicity of the map, and the Euler-Lagrange equations for full Skyrme energy are explicitly given in \cite{rat}. No solution in close form is known, but in most cases its numerical approximation provides the lowest energy field configurations available until now.
\end{ex}

\begin{ex}[$\alpha$-joins] Let $\psi^{\alpha}_{k, \ell}: \Ss^{3} \to \Ss^3$ be given by
\begin{equation}\label{aho}
(\cos s \cdot e^{\ii x_1}, \ \sin s \cdot e^{\ii x_2}) \mapsto 
\left( \cos \alpha (s) e^{\ii k x_1}, \  \sin \alpha (s) \cdot e^{\ii \ell x_2} \right),
\end{equation}
where $k, \ell \in \ZZ^*$ and $\alpha:[0, \pi /2] \to [0, \pi /2]$ satisfies the boundary conditions $\alpha(0)=0$, $\alpha(\pi /2)=\pi /2$. These maps are \textit{equivariant} (with respect to projections to the $s$-parameter) and have the degree $k\ell$. Their Cauchy-Green tensor's eigenvalues are
$$ \lambda_{1}^{2}=[\alpha^{\prime}(s)]^2, \quad
\lambda_{2}^{2}=\ell^2\frac{\sin^2 \alpha}{\sin^2 s}, \quad
\lambda_{3}^{2}=k^2\frac{\cos^2 \alpha}{\cos^2 s}.$$
They are critical points for full Skyrme energy if $\alpha$ is a solution of an ODE, explicitly given in \cite{JMW}. Note that, without refering to the principle of symmetric criticality, one can directly check that two of the equations in \eqref{sig3} are trivially satisfied and the third one gives us precisely this ODE in $\alpha$ (by adding the harmonicity equation).\\
Recall that in \cite{JMW} it was conjectured that the true minima of degree (baryon number) $B=2$ is of this form. Imposing the condition of contactomorphic type $\lambda_{3}^{2} = \lambda_{1}^{2}\cdot\lambda_{2}^{2}$ and $k=2$, $\ell=1$, we get the profile function $\alpha(s)=\arccos(\cos^2 s)$ (be aware that we do not obtain a contactomorphism). The corresponding field configuration $\psi^{\alpha}_{k, \ell}$ on $\Ss^{3}_{R}$ has a total energy ratio $\mathcal{E}_{\texttt{Skyrme}}/12\pi^2$ of $1.05175$ (after minimization with respect to the radius $R$), close to the value $1.047762$ found in \cite{JMW}. Compare with $\mathcal{E}_{\texttt{Skyrme}}/12\pi^2=1$ for the identity map.\\
A direct computation shows that this particular configuration can be rendered $\sigma_{2}$ or $\sigma_{1,2}$-critical by an appropriate conformal change of metric, using Theorem \ref{bic}.  
\end{ex}

\begin{ex}[Nomizu-equivariant maps, \cite{xi}] Let $\zeta^{\alpha}_{k}: \Ss^{3} \to \Ss^{3}$ be given by:
\begin{equation}\label{aho}
e^{\ii \theta}\left(\cos s \, \textbf{x} + \ii \sin s \, \textbf{y}\right) \mapsto 
e^{\ii k\theta}\left(\cos \alpha(s) \, \textbf{x} + \ii \sin \alpha(s) \, \textbf{y}\right),
\end{equation}
where $k$ is an odd integer and $\alpha:[0, \pi /4] \to [0, \pi /4]$ satisfies the boundary conditions $\alpha(0)=0$, $\alpha(\pi /4)=\pi /4$. These maps are \textit{equivariant} (with respect to projections to the $s$-parameter) and of degree $k$. 

The system \eqref{sig3} reduce to one single equation:
\begin{equation*}
\alpha^{\prime \prime} (k^2 -2k \sin 2\alpha \sin 2s +1) -
2k(\alpha^{\prime})^2 \sin 2s \cos 2\alpha +
2\alpha^{\prime} \left[(k^2 + 1)\tan 2s - 2k \frac{\sin 2\alpha}{\cos 2s}\right] + k^2 \sin 4\alpha = 0.
\end{equation*}
Note that $\alpha(s)=s$ is a solution for $k=1$ (i.e. $\mathrm{Id}_{\Ss^3}$ is $\sigma_2$-critical). It would be interesting to find higher degree solutions in close form.
\end{ex}

\begin{ex}[Absolute minima for the strongly coupled Faddeev model]
Let $\varphi_{\gamma, k}: \Ss^3 \to \Ss^2$ be defined by, cf. \cite[Ex.2]{slub}:
\begin{equation}
e^{\ii \theta}\left(\cos s \, \textbf{x} + \ii \sin s \, \textbf{y}\right) \mapsto \left(\cos \gamma(s) \ , \ \sin \gamma(s) \cdot e^{\ii k\mu}\right),
\end{equation}
where $\gamma:[0, \pi / 4] \to [0, \pi]$ and $k$ is an even integer. For $\gamma(s)=\frac{\pi}{2}\pm 2s$ and $k=2$ we obtain again the Hopf map.
The eigenvalues of this submersion are
$$
\lambda_{1}^{2}=0; \quad 
\lambda_{2}^{2} = \left[\gamma^{\prime}(s)\right]^2 ; \quad
\lambda_{3}^{2}= \frac{k^2 \sin^2 \gamma(s)}{\cos^2 2s} ,
$$
with the corresponding orthonormal frame of eigenvectors
$$
E_{1}=\frac{\partial}{\partial \theta}; \quad 
E_{2}=\frac{\partial}{\partial s}; \quad
E_{3}=\tan 2s \frac{\partial}{\partial \theta} - \frac{1}{\cos 2s} \frac{\partial}{\partial \mu}.
$$
As the vertical vector $E_1$ is the Reeb vector field of the standard contact structure of $\Ss^3$, the fibers are minimal, so the $\sigma_2$-Euler-Lagrange equations \eqref{fh} reduce in this case to $\gr (\lambda_{2}^{2}\lambda_{3}^{2})=0$. This equation is obviously solved by the same $\gamma$ as for Hopf map, for  any (even) value of $k$. To calculate the Hopf invariant of this map, notice first that:
$$
\varphi_{\gamma, k}^{*}\Omega = \frac{k}{2}\dif \eta ,
$$
where $\eta$ is the standard contact 1-form given above. So, by definition
$$
Q(\varphi_{\gamma, k})=\frac{1}{4\pi^2}\int_{\Ss^3}\frac{k^2}{4}\dif\eta \wedge \eta =\frac{k^2}{8\pi^2}\mathrm{Vol}(\Ss^3)=\frac{k^2}{4}.
$$
Notice that $\mathcal{E}_{\texttt{Faddeev}}^{\infty}(\varphi_{\gamma, k})=\mathcal{E}_{\sigma_2}(\varphi_{\gamma, k})=4\pi^2 k^2$ and therefore the topological lower bound in \cite{svee}, $$\mathcal{E}_{\sigma_2}(\varphi) \geq 16\pi^2 Q(\varphi)$$ is attained, i.e. $\varphi_{\gamma, k}$ is a global minima in its homotopy class for the strongly coupled Faddeev model on $\Ss^3$.

\end{ex}
\begin{ex}
Consider the $\alpha$-\textit{Hopf construction} \cite{ratto}:
\begin{equation}\label{aho}
\varphi: \Ss^{3} \to \Ss^2, \qquad 
(\cos s \cdot e^{\ii x_1}, \ \sin s \cdot e^{\ii x_2}) \mapsto 
\left( \cos \alpha (s), \  \sin \alpha (s) \cdot e^{\ii (kx_1+\ell x_2)} \right),
\end{equation}
where $k, \ell \in \ZZ^*$ gives the Hopf invariant $Q(\varphi)=k\ell$ and $\alpha:[0, \pi /2] \to [0, \pi]$ satisfies the boundary conditions $\alpha(0)=0$, $\alpha(\pi /2)=\pi$. When $(k, \ell)=(\mp 1, 1)$ and $\alpha(s) = 2s$, this construction provides the (conjugate) Hopf fibration.\\
Now take $\alpha(s) = 2s$ so that $\varphi$ corresponds (via the composition with a version of stereographic projection) to a higher charged configuration proposed in \cite{war}. Notice that: $\varphi^* \Omega = \dif \eta_{k, \ell}$, where $\eta_{k, \ell} =k\cos^{2}s \, \dif x_1 - \ell \sin^{2}s \, \dif x_2$ defines a contact form on $\Ss^{3}$ with volume element
$\frac{1}{2}\eta_{k, \ell} \wedge \dif \eta_{k, \ell}= k\ell \sin s \cos s \dif s \wedge \dif x_1 \wedge \dif x_2$.
Inspired by the general Boothby-Wang construction described in Example \ref{BWfibr}, let us endow the 3-sphere with the metric
$$
g_{k, \ell} = \frac{1}{4}\varphi^*h + \eta_{k, \ell} \otimes \eta_{k, \ell}
= k^2 \cos^{2}s \, \dif x_1^2 + \ell^2 \sin^{2}s \, \dif x_2^2 + \dif s^2. 
$$
Then we can check that the fibers of $\varphi$ are minimal with respect to $g_{k, \ell}$, by verifying the identity $\mathcal{L}_{\xi}\eta_{k, \ell}=0$, where $\xi=k^{-1}\frac{\partial}{\partial x_1}-\ell^{-1}\frac{\partial}{\partial x_2}$ is an unitary vertical vector field (and the Reeb field of the contact structure).
As by construction our map is horizontally homothetic with dilation $\lambda=2$, applying Corollary \ref{crit+}$(ii)$, we can conclude that 
$\varphi : (\Ss^3, g_{k, \ell}) \to (\Ss^2, h_{\texttt{standard}})$ is a critical map for \textit{full} Faddeev energy on the "squashed" sphere. Its Hopf invariant is $k\ell$, its energy is proportional to the standard energy of the Hopf map, i.e. $\mathcal{E}_{\sigma_{1,2}}^{g_{k, \ell}}(\varphi)= Q(\varphi) \cdot \mathcal{E}_{\sigma_{1,2}}^{g_{1, 1}}(\varphi)$, and when $k=\ell=1$ it coincides with the standard Hopf map on the round 3-sphere. 
\end{ex} 

As we have seen in Theorem \ref{contactomo}, contactomorphisms (i.e. mappings that preserves contact structures, $\varphi^* \eta ' = a\eta$) are close to both harmonic and $\sigma_2$--maps. When they are moreover transversally holomorphic (or commutes with the almost contact structures), their Cauchy-Green spectrum is $\{a^2, a, a\}$, that is they are contact homotheties. But the existence of a contact homothety imposes severe restrictions on the curvatures of the domain and codomain. Indeed, if it exists a contact homothety between Sasakian space forms $M$ and $N$, then their $\phi$-sectional curvatures must obey the relation:
$$
\frac{c_M +3}{a} - 3 = c_N.
$$
But \textit{a priori} the possibility to find (general) contactomorphisms that solve Skyrme equations remains open. Moreover, an estimation of their energy for small degrees shows that it could be lower than in the rational map ansatz case. We end this paragraph with a short list of contactomorphisms, emphasizing those that depend on a free function, to be determined in order to obtain $\sigma_2$-critical maps.

\begin{ex}[Contactomorphisms]\label{tact}
$(i)$ (From the Heisenberg group.) Consider $\RR^3$ endowed with its standard contact structure $\eta = \dif z - y\dif x$ and the associated (Sasakian) metric $g = \dif x^2 +
\dif y^2 + \eta \otimes \eta$ of $\phi$-sectional curvature $-3$.
Then $\delta_a: \RR^3 \to \RR^3$ given by $\delta_a (x, y, z) := (ax, ay, a^2 z)$, $a \in \RR^+$, is a contact homothety and therefore $\sigma_{1,2}$-critical.\\
In the same context, "shift" contactomorphisms $\varsigma_f (x, y, z) := (x, ay + f^{\prime}(x), a z + f(x))$, $a \in \RR^+$ depend on a free function (if \eqref{contactsig3} is compatible in $f$, we get a $\sigma_2$-critical map).\\
For a contactomorphism with $\lambda_{1}^{2}=\lambda_{2}^{2}\lambda_{3}^{2}=1$, $\lambda_{2,3}^{2}$ nonconstant, between $(\RR^3, \eta, g)$ and $(\RR^3, \eta^{\prime} = \cos z \dif x + \sin z \dif y, g^{\prime}=\dif x^2 + \dif y^2 +\dif z^2 )$, see \cite[pg.100]{ble}.\\
For a contactomorphism between $(\RR^3, \eta, g)$ and $\Ss^3 \setminus \{p\}$ with standard contact structure and metric, see \cite[Prop. 2.1.8]{gei}.

\medskip
$(ii)$ (Between 3-tori.) Consider the torus $\TT^3$ endowed with its standard contact structure $\eta = \cos z \dif x + \sin z \dif y$ and the associated (non-Sasakian) flat metric 
$g = \dif x^2 + \dif y^2 + \dif z^2$.
Let $\varphi_{a}^{f}: \TT^3 \to \TT^3$ be given by 
$$(x, y, z) \mapsto (ax - \int_{0}^{z} \tan s \cdot f'(s)\dif s, ay + f(z), z),$$
where $a \in \ZZ$ and $f$ is an arbitrary periodic function. Then $\varphi_{a}^{f}$ is a contactomorphism, its eigenvalues satisfying $\lambda_{1}^{2}=\lambda_{2}^{2}\lambda_{3}^{2}=a^2$. As $g$ is not a $K$-contact metric, in order to find $f$ such that $\sigma_{1,2}$-critical, we must use \eqref{contactsig3}. 

\medskip
$(iii)$ (Between 3-spheres.) Let $\zeta^{\alpha, \beta}_{k}: \Ss^{3} \to \Ss^{3}$ be given by:
\begin{equation*}
e^{\ii \theta}\left(\cos s \, \textbf{x}(\mu) + \ii \sin s \, \textbf{y}(\mu)\right) \mapsto 
e^{\ii k\theta}\left(\cos \alpha(s, \mu) \, \textbf{x}(\beta(\mu)) + \ii \sin \alpha(s, \mu) \, \textbf{y}(\beta(\mu))\right),
\end{equation*}
where $k \in \ZZ$ and $\alpha, \beta$ satisfy appropriate boundary conditions. One can directly check that if we take 
$\alpha(s, \mu)=\frac{1}{2}\arcsin\left(k A(\mu)\sin 2s \right)$ and $\beta(\mu)=\int_{0}^{\mu}A(t)^{-1}\dif t$, $A$ being an arbitrary function, then $\zeta^{\alpha, \beta}_{k}$ is a contactomorphism (i.e. $\varphi^* \eta = k\eta$). 

\end{ex}

\medskip

\end{document}